\tikzset{square matrix/.style={
    matrix of nodes,
    column sep=-\pgflinewidth, row sep=-\pgflinewidth,
    nodes={draw,
      minimum height=4.5pt,
      anchor=center,
      text width=4.5pt,
      align=center,
      inner sep=0pt
    },
  },
  square matrix/.default=1.2cm
}
\newtheorem{thm}{Theorem}
\newtheorem{rem}{Remark}
\newtheorem{defn}{Definition}
\newtheorem{cor}{Corollary}
\newtheorem{obs}{Observation}
\newtheorem{conj}{Conjecture}
\newtheorem{prop}{Proposition}
\begin{document}

\title{Quasi-transversal in Latin Squares}

\author{Adel P. Kazemi$^{1,2}$ and Behnaz Pahlavsay$^{1,3}$ \\[1em]
$^1$ Department of Mathematics, University of Mohaghegh Ardabili, \\ P.O.\ Box 5619911367, Ardabil, Iran. \\[1em]
$^2$ Email: adelpkazemi@yahoo.com \\
$^3$ Email: pahlavsayb@yahoo.com, pahlavsay@uma.ac.ir \\[1em]
}

\maketitle

\begin{abstract}
In this paper, we first present the relation between a transversal in a Latin square with some concepts in its Latin square graph, and give an equivalent condition for a Latin square has an orthogonal mate. The most famous open problem involving Combinatorics is to find maximum number of disjoint transversals in a Latin square. So finding some family of decomposible Latin squares into disjoint transversals is our next aim. In the next section, we give an equivalent statement of a conjecture which has been attributed to Brualdi, Stein and Ryser by the concept of quasi-transversal. Finally, we prove the truth of the Rodney's conjecture for a family of graphs.
\\[0.2em]

\noindent
Keywords: Latin square, transversal, quasi-transversal, $k$-domination number, domatic number.
\\[0.2em]

\noindent
MSC(2010): 05B15, 05C69.
\end{abstract}

\section{Introduction}
All graphs considered here are finite, undirected and simple. For standard
graph theory terminology not given here we refer to \cite{West}. Let $
G=(V,E) $ be a graph with \emph{vertex set} $V$ and \emph{edge set} $E$. 
The \emph{open neighborhood} and the
\emph{closed neighborhood} of a vertex $v\in V$ are $N_{G}(v)=\{u\in V\ |\
uv\in E\}$ and $N_{G}[v]=N_{G}(v)\cup \{v\}$, respectively. The \emph{degree}
of a vertex $v$ is also $deg_G(v)=\mid N_{G}(v) \mid $. The \emph{minimum}
and \emph{maximum degree} of $G$ are denoted by $\delta=\delta (G)$ and $%
\Delta=\Delta (G)$, respectively. A graph $G$ is $r$-\emph{regular} if and only if
$\Delta (G)=\delta (G)=r$.

\vspace{0.2cm}
\textbf{Latin Square.} For any $r\leq n$, a  \emph{$n$-by-$r$ Latin rectangle} is a $n$-by-$r$ grid, each entry
of which is a number from the set $[n]=\{1,2,\cdots,n\}$ such that no number
appears twice in any row or column. An $n$-by-$n$ Latin rectangle is called a \emph{Latin square} of order $n$.
If in a Latin square $L$ of order $n$ the $\ell^2$ cells defined by $\ell$ rows and $\ell$ columns form
a Latin square of order $\ell$, it is a Latin subsquare of $L$. It is clear that, if we permute in any way the rows, or the columns, or the symbols, of a Latin square, the result is still a Latin square. We say that two Latin squares $L$ and $L'$ (using the same symbol set) are \emph{isotopic} if there is a triple $(f,g,h)$, where $f$ is a row permutation, $g$ a column permutation, and $h$ a symbol permutation, carrying $L$ to $L'$: this means that if the $(i, j)$ entry of $L$ is $k$, then the $(f(i),g(j))$ entry of $L'$ is $h(k)$. The triple $(f,g,h)$ is called an \emph{isotopy}.

Let $L_1$ be a Latin Square of order $n$ with the symbol set $A_1$, and let $L_2$ 
be a Latin Square of order $n$ with the symbol set $A_2$. If there exists a bijection between $A_1$ and $A_2$, then we say that $L_1$ and $L_2$ are \emph{symbol-isomorphic}. If the bijection is 
$\sigma :A_1 \rightarrow A_2$, we write $L_2=\sigma (L_1)$. 
Assume that a Latin square $L$ of order $n$ can be partitioned to the Latin rectangle $L_1$,$\cdots$,  $L_t$
of sizes $m_1\times n, \cdots, m_t\times n$, respectively, such that the first $m_1$ rows in $L$ is the rows
in $L_1$, the second $m_2$ rows in $L$ is the rows in $L_2$, and so on. Then we simply write $L=(L_1,\cdots,L_t)$. 
 
 A pair of Latin squares $A=[a_{i,j}]$ and $B=[b_{i,j}]$ of order $n$ are 
said to be \emph{orthogonal mates} if the $n^2$ ordered pairs $(a_{i,j},b_{i,j})$
are distinct. 

An \emph{orthogonal array} $OA(n,3)$ of \emph{order} $n$ and \emph{depth} $3$ is a $3$ by
$n^2$ array with the integers $1$ to $n$ as entries, such that for any two
rows of the array, the $n^2$ vertical pairs occurring in these rows are
different. Suppose that we have such an array. Call the rows $r$, $c$,
and $s$, in any order. For any pair $(i,j)$, there is a $k$ such that $r_k = i$,
$c_k = j$. We make a square with entry $s_k$ in the $i$-\emph{th} row and $j$-\emph{th}
column (for all $i$ and $j$). The definition of orthogonal array ensures that
this is a Latin square and we can reverse the procedure. Therefore
the concepts of Latin square and orthogonal array are equivalent. Figure \ref{fig:0} shows an $OA(4,3)$ and corresponding Latin squares. 
\begin{figure}[h]
    \centering
    \includegraphics[width=95mm, height=47mm]{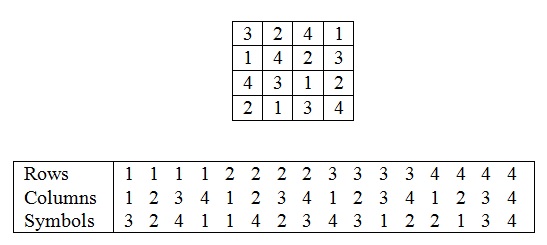}
    \vspace{-0.5cm}
    \caption{A Latin squares of order $4$ and its corresponding $OA(4,3)$}
    \label{fig:0}
\end{figure}

A \emph{cyclic Latin square} is a Latin square that each column is obtained from the previous one by adding the same difference to each element of the previous column. We know that every finite set $G$ of order 
$n$ with a binary operation defined on $G$ is a semigroup if and only if its operation table is a Latin square of order $n$. We denote this Latin square by $L_G$. 

In contrast to the various constructive procedures for the formation of
sets of mutually orthogonal Latin squares, several conditions that a given Latin square (or set of
squares) be not extendible to a larger set of orthogonal Latin squares are
known. In particular L. Euler \cite{Euler}, E. Maillet \cite{Mailet}, H. B. Mann \cite{Mann} and E. T. Parker \cite{Parker6} and \cite{Parker9} have all given such conditions.
The results of Euler and Maillet concern Latin squares of a particular
kind which the latter author has called $q$-step type. A Latin square of order $mq$ is said to be of \emph{$q$-step type} if it can be represented by a matrix of $q\times q$ blocks $A_{ij}$ as follows:
\begin{figure}[h]
    \centering
     \includegraphics[width=42mm, height=20mm]{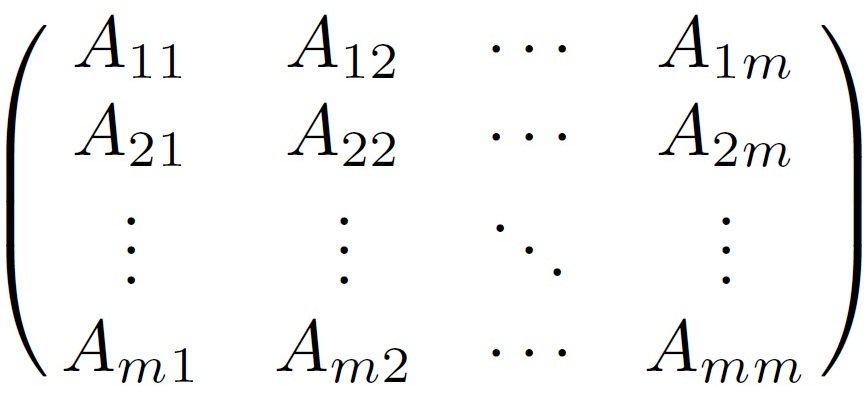}
    \label{fig:-1}
\end{figure}

where each block $A_{ij}$ is a Latin subsquare of order $q$ and two blocks $A_{ij}$ and $A_{i'j'}$ contain
the same symbols if and only if $i+j\equiv i'+j'$ (mod $m$).
\begin{rem}\label{1-step}
Every cyclic Latin square is a Latin square of $1$-step type.
\end{rem}

A \emph{transversal} in a Latin square of order $n$ is a set of entries which
includes exactly one entry from each row and column and one of each
symbol. The maximum number of disjoint transversals in a Latin square
$L$ is called its \emph{transversal number} and denoted by
$\tau(L)$. Similarly, a transversal in a $n$-by-$r$ Latin rectangle is a set 
of entries which includes exactly one entry from each row, and at most one entry from each column and at most one of each symbol. For any transversal $T$ in $L=(L_1,\cdots,L_t)$ the notation $T=(T_{1},\cdots, T_{t})$ denotes that $T$ is partitioned to the transversals $T_i$ in the disjoint $n$-by-$m_i$ Latin rectangles $L_i$.

If $\tau(L)=n$, then we say it has \emph{a decomposition into disjoint transversals}. In 
\cite{WCS} Wanless, Church and Aldates in a natural way, generalized the concept of transversal to $k$-transversal as following: For any positive integer $k$, a \emph{$k$-transversal} or $k$-\emph{plex} in a Latin square of order $n$ is a set of $nk$ cells, $k$ from each row, $k$ from each column, in which every symbol occurs exactly $k$ times. The maximum number of disjoint $k$-transversals in a Latin square $L$ is called its \emph{$k$-transversal number} and denoted by
$\tau_{k}(L)$. Obviously $\tau_{k}(L)\leq n/k$. If $\tau_{k}(L)=n/k$, then we say it has  \emph{a decomposition into disjoint $k$-transversals}. Obviously, the next remark is true.
\begin{rem}
\label{k,n-k} 
The complement of a $k$-transversal in a Latin square of order $n$ is a $(n-k)$-tarnsversal in the Latin square.
\end{rem}
A \emph {partial transversal} of length $k$ is a set of $k$ entries, each
selected from different rows and columns of a Latin square such that no two entries
contain the same symbol. A partial transversal is \emph{completable} if it is a subset of
some transversal, whereas it is \emph{non-extendible} if it is not contained in any partial
transversal of greater length. Since not all squares of order $n$ have a partial transversal of length $n$ (i.e. a transversal), the best we can hope for is to find one of length $n-1$. Such partial
transversals are called \emph{near-transversals}.

\vspace{0.2cm}
\textbf{Latin Square Graph.} For a Latin square $L=(\ell_{i,j})$, we
correspond a Latin square graph $L_3(L,n)$, or briefly $L_3(n)$ if
there is no ambiguity, as following: $V(L_3(L,n))=\{(i,j)| 1\leq i,j\leq n\}$, 
and two distinct vertices $(i,j)$ and $(p,q)$ are adjacent if and only if $i=p$ 
or $j=q$ or $\ell_{ij}=\ell_{pq}$.

It is trivial that $L_3(L,n)=K_{n^2}$ if and only if $n=1,2$. A Latin square graph $L_3(L,n)$ is
a $3(n-1)$-regular graph, and any two different vertices $(i,j)$ and $(i,q)$ have $n$
neighbors in-common, $n-2$ vertices in the row $i$ and two vertices in the columns $j$ and $q$. Similarly, any two different vertices $(i,j)$ and $(p,j)$ have $n$ neighbors in-common. By the corresponding between a Latin square with an unique orthogonal array, it can be easily seen that any two different vertices $(i,j)$ and $(p,q)$ with $\ell_{i,j}=\ell_{p,q}$ have also $n$
neighbors in-common.

A \emph{proper coloring} of a graph $G$ is a function from the
vertices of the graph to a set of colors such that any two adjacent
vertices have different colors, and the \emph{chromatic number}
$\chi (G)$ of $G$ is the minimum number of colors needed in a proper
coloring of a graph \cite{West}. In a proper coloring of a graph a
\emph{color class} is the independent set of all same colored
vertices of the graph. If $f$ is a proper coloring of $G$ with the
color classes $V_1$, $V_2$, ..., $V_{\ell}$ such that every vertex
in $V_i$ has color $i$, we simply write $f=(V_1,V_2,...,V_{\ell})$.

\emph{Domination} in graphs is now well studied in graph theory and the literature on this subject has been surveyed and detailed in the two
books by Haynes, Hedetniemi and Slater~\cite{HHSF,HHSD}.

Let $k$ be a positive integer. A subset $S$ of $V$ is a \emph{$k$-dominating set} of $G$, briefly $k$DS, if for every vertex $v\in V-S$, $\mid N_G(v)\cap S\mid \geq k$. The \emph{$k$-domination number} $\gamma _{k}(G)$ of $G$ is the minimum cardinality of a $k$-{dominating set} of $G$. For a graph to have a $k$-dominating set, its minimum degree must be at least $k-1$. A $k$-dominating set
of cardinality $\gamma _{k}(G)$ is called a $\gamma_{k}(G)$-\emph{set}. A \emph{$k$-domatic partition} of $G$ is a partition of $V(G)$ into $k$-dominating sets. The maximum number of $k$-dominating sets in a $k$-domatic partition of $G$ is called the \emph{$k$-domatic number} $d_k(G)$ of $G$. The $1$-domination and $1$-domatic numbers are known as \emph{domination number} and \emph{domatic number} and denoted by $\gamma(G)$ and $d(G)$, respectively. 

The $k$-domination number was first studied by Fink and Jacobson \cite{FJ0,FJ}, and Cockayne and Hedetniemi \cite{CH} introduced the concept of the domatic number of a graph. For more information on the domatic number and their variants, we refer the reader to the survey article by Zelinka \cite{BZ}. 

For positive integers $k$ and $\ell$, a $k$-dominating set $S$ of a graph $G$ is called a $(\ell,k)$-\emph{independent dominating set}, or briefly $(\ell,k)$-IDS, of $G$ if the subgraph induced by $S$ is a $\ell$-\emph{independent set}, that is, the maximum degree in the subgraph induced by $S$ is at most $\ell-1$. The $(\ell,k)$-independent domination number (resp. the $\ell$-independence number) of $G$ denoted by $i_{\ell,k}$ (resp. $\beta_{\ell}$) is the cardinality of the smallest $(\ell,k)$-IDS (resp. largest independent set) in $G$. Similarly, the concept of $(\ell,k)$-\emph{independent domatic number} $d_{(\ell,k)}(G)$ of a graph $G$ can be defined.
\vspace{0.2cm}

The following theorem provides a lower bound for the $k$-domination number of a graph in terms of the order and the maximum degree of the graph.
\begin{thm} \emph{\cite{KV}}
\label{LB.gamma_k.Fink.1985} For any graph $G$ of order $n$,
\[
\gamma_{k}(G)\geq \frac{kn}{k + \Delta(G)}.
\]
\end{thm}
As an immediately consequence of Theorem \ref{LB.gamma_k.Fink.1985},
we find the following proposition for Latin square graphs.

\begin{prop}
\label{LB.gamma_3(LSG)} For any Latin square graph $L_3(L,n)$,
$\gamma_{3}(L_3(L,n))\geq n.$
\end{prop}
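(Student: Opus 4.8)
The plan is to apply Theorem~\ref{LB.gamma_k.Fink.1985} directly with the specific parameters of the Latin square graph. The excerpt already records two facts I need: first, that $L_3(L,n)$ is a $3(n-1)$-regular graph on $n^2$ vertices, and second, the general lower bound $\gamma_k(G)\ge kn/(k+\Delta(G))$. So the whole argument is a substitution followed by an arithmetic simplification.

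First I would note that the order of $L_3(L,n)$ is $N=n^2$, since its vertex set is $\{(i,j)\mid 1\le i,j\le n\}$. Next, because the graph is $3(n-1)$-regular, its maximum degree is $\Delta(L_3(L,n))=3(n-1)$. I would then instantiate Theorem~\ref{LB.gamma_k.Fink.1985} with $k=3$ and $G=L_3(L,n)$, taking care to use $N=n^2$ in the numerator rather than $n$, since the theorem's ``$n$'' denotes the order of the graph. This gives
\[
\gamma_3(L_3(L,n))\ \ge\ \frac{3\cdot n^2}{3+3(n-1)}.
\]

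The remaining step is to simplify the right-hand side. The denominator collapses to $3+3(n-1)=3n$, so the fraction becomes $3n^2/(3n)=n$, which yields exactly $\gamma_3(L_3(L,n))\ge n$ as claimed. I would also remark that the hypothesis of Theorem~\ref{LB.gamma_k.Fink.1985} for a $3$-dominating set to exist, namely $\delta(G)\ge k-1=2$, is satisfied whenever $n\ge 2$ since the graph is $3(n-1)$-regular; the degenerate case $n=1$ gives a single isolated vertex and the bound holds trivially.

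There is essentially no obstacle here: the proposition is an immediate corollary, and the only point requiring any care is bookkeeping the difference between the order $n^2$ of the Latin square graph and the side length $n$ of the Latin square itself, so that the ``$n$'' appearing in the statement of Theorem~\ref{LB.gamma_k.Fink.1985} is correctly read as the order $n^2$. Once that substitution is made correctly, the denominator simplifies so cleanly to $3n$ that the bound $n$ emerges directly.
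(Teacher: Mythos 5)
Your proposal is correct and is exactly the derivation the paper intends: the paper presents the proposition as an immediate consequence of Theorem~\ref{LB.gamma_k.Fink.1985}, obtained by substituting the order $n^2$ and maximum degree $3(n-1)$ of $L_3(L,n)$ with $k=3$, giving $3n^2/(3+3(n-1))=n$. Your care in distinguishing the graph's order $n^2$ from the Latin square's side length $n$ is precisely the only point of substance, and you handle it correctly.
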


\section{Transversals}
Every transversal in a Latin square $L$ gives a (1,3)-independent-dominating set of $L_3(L,n)$ with cardinality $n$. Next theorem presents a sufficient condition to guarantee its converse.
\begin{thm}
\label{Suf.Con.}
Let $S$ be a subset of the vertex set 
of the Latin square graph  $L_3(L,n)$ of a Latin square $L=(\ell_{i,j})$ of order $n$, and let $S'=\{\ell_{ij}~|~(i,j)\in S\}$. Then the following statements are equivalent.

\textbf{i}. $S$ is a $3$-dominating set of $L_3(L,n)$ with cardinality $n$.
\vspace{-0.01cm}

\textbf{ii}. $S$ is a $(1,3)$-independent dominating set of $L_3(L,n)$ with cardinality $n$.
\vspace{-0.01cm}

\textbf{iii}. $S'$ is a transversal in $L$.
\end{thm}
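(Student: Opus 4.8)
The plan is to establish the cycle of implications (iii)~$\Rightarrow$~(ii)~$\Rightarrow$~(i)~$\Rightarrow$~(iii). Two of these are immediate from the definitions, and the single substantive step is (i)~$\Rightarrow$~(iii), which I would treat by a double-counting argument closed off with Cauchy--Schwarz.

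First I would record the local structure of $L_3(L,n)$ already sketched in the excerpt: for a vertex $v=(i,j)$ the neighbourhood $N(v)$ splits into three lines through $v$, namely the $n-1$ other cells of row $i$, the $n-1$ other cells of column $j$, and the $n-1$ other cells carrying symbol $\ell_{ij}$. These three sets are pairwise disjoint, since a second cell of row $i$ cannot repeat the symbol $\ell_{ij}$ and cannot lie in column $j$ (else it would be $v$ itself), and similarly for the other pairs. Writing $r_i$, $c_j$ and $s_k$ for the number of cells of $S$ that lie in row $i$, lie in column $j$, and carry symbol $k$ respectively, this disjointness yields the key identity $|N(v)\cap S|=r_i+c_j+s_{\ell_{ij}}$ for every $v=(i,j)\notin S$, and the obvious counts $\sum_i r_i=\sum_j c_j=\sum_k s_k=|S|=n$.

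For (iii)~$\Rightarrow$~(ii): if $S'$ is a transversal, then $S$ meets each row, each column and each symbol exactly once, so $r_i=c_j=s_k=1$ throughout; hence every $v\notin S$ has exactly $3$ neighbours in $S$ (so $S$ is $3$-dominating), while any two cells of a transversal lie in distinct rows, columns and symbol classes and are therefore non-adjacent, so $S$ is independent. Thus $S$ is a $(1,3)$-independent dominating set of cardinality $n$. The implication (ii)~$\Rightarrow$~(i) is purely definitional, since a $(1,3)$-IDS is in particular a $3$-dominating set.

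The main step is (i)~$\Rightarrow$~(iii). Summing the $3$-domination inequality $r_i+c_j+s_{\ell_{ij}}\ge 3$ over all $n^2-n$ vertices outside $S$ and regrouping the three families gives $\sum_i r_i(n-r_i)+\sum_j c_j(n-c_j)+\sum_k s_k(n-s_k)\ge 3(n^2-n)$, which, using $\sum r_i=\sum c_j=\sum s_k=n$, rearranges to $\sum_i r_i^2+\sum_j c_j^2+\sum_k s_k^2\le 3n$. On the other hand Cauchy--Schwarz gives $\sum_i r_i^2\ge (\sum_i r_i)^2/n=n$, and likewise for the $c$'s and the $s$'s, so the same sum is at least $3n$. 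Equality must therefore hold in all three estimates, forcing every $r_i$, $c_j$ and $s_k$ to equal $1$; that is precisely the statement that $S$ meets each row, each column and each symbol exactly once, i.e.\ $S'$ is a transversal. The only point requiring care is the bookkeeping for the symbol term $\sum_{v\notin S}s_{\ell_{ij}}$, which I would evaluate by grouping the non-$S$ cells by their symbol (each symbol class being a transversal-shaped permutation of size $n$), yielding $\sum_k s_k(n-s_k)$; once the disjointness of the three lines and this grouping are in place, the remainder is the clean tightness analysis of Cauchy--Schwarz.
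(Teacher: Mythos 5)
Your proposal is correct, and for the one substantive implication (i)~$\Rightarrow$~(iii) it takes a genuinely different route from the paper. The paper also starts from the decomposition of $N((i,j))$ into the row, column and symbol lines, but it then simply asserts that a $3$-dominating set of cardinality $n$ must meet every row and every column exactly once, and it finishes the symbol count via an ad hoc normalization ($\ell_{1j}=j$ and $S\cap(\{1\}\times[n])=\{(1,1)\}$) together with a uniqueness claim; the passage from the hypothesis that $S$ is a $3$-dominating set of size $n$ to the conclusion that each $r_i$ and $c_j$ equals $1$ is left essentially unjustified there. Your global double count --- summing $r_i+c_j+s_{\ell_{ij}}\ge 3$ over the $n^2-n$ cells outside $S$, rearranging to $\sum_i r_i^2+\sum_j c_j^2+\sum_k s_k^2\le 3n$, and closing with the Cauchy--Schwarz lower bound of $3n$ --- supplies exactly the missing argument and handles rows, columns and symbols symmetrically in one stroke. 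The only facts you need are the pairwise disjointness of the three lines through a cell not in $S$ (which you verify correctly from the Latin property: any cell lying on two of the three lines through $(i,j)$ would have to be $(i,j)$ itself) and the counts $\sum_i r_i=\sum_j c_j=\sum_k s_k=n$. In short, your argument is cleaner and more complete than the paper's, and the tightness analysis forcing $r_i=c_j=s_k=1$ is exactly the statement that $S$ is a transversal; I see no gap.
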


\begin{proof}
Obviously it is sufficient to prove that $i$ implies $iii$. For this aim, let $S$ be a $3$-dominating set of $L_3(L,n)$ with cardinality $n$. Since for any vertex $(i,j)$, 
\begin{equation*}
\begin{array}{lll}
 N((i,j))& = &\{i\}\times ([n]-\{j\})\\
& \cup &([n]-\{i\})\times \{j\}\\
& \cup & \{(p,q)~|~\ell_{pq}=\ell_{ij}\},
\end{array}
\end{equation*}\\
we conclude that for any $1\leq i,j \leq n$, 
$$|S\cap (\{i\}\times [n])|= |S\cap ([n]\times \{j\})|=1.$$
Also, we see that $|S\cap N((i,j))|=3$ for any vertex $(i,j)\notin S$. Without loss of generality, we may assume that $\ell_{1j}=j$, for $1\leq j\leq n$ and $|S\cap (\{1\}\times [n])|=\{(1,1)\}$. Then for any $2\leq j\leq n$, there exist an unique $2\leq i\leq n$ and an unique $(p,q)$ such that $p\neq 1$, $q\neq j$ and $\ell_{pq}=j$. Hence $S'=[n]$, that is, $S'$ is a transversal in $L$.
\end{proof}

Theorem \ref{Suf.Con.} states that the maximum number of disjoint transversals in a Latin square is equivalent to the maximum number of disjoint $(1,3)$-independent dominating sets of cardinality $n$ in its Latin square graph, when the $3$-domination number of the graph is $n$. So we have the following theorem.

\begin{thm}
\label{d_3(L_3(L,n))=tau (L)} Let $L$ be a Latin square of order
$n$. If $\gamma_3(L_3(L,n))=n$, then
\[
d_3(L_3(L,n))\geq d_{(1,3)}(L_3(L,n))=\tau(L).
\]
\end{thm}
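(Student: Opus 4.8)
The plan is to establish the equality $d_{(1,3)}(L_3(L,n))=\tau(L)$ first and then read off the inequality almost for free; the whole argument runs on Theorem \ref{Suf.Con.} and Proposition \ref{LB.gamma_3(LSG)}. The first step is to pin down exactly which vertex subsets are $(1,3)$-independent dominating sets under the hypothesis $\gamma_3(L_3(L,n))=n$. If $S$ is such a set, then on one side $S$ is $1$-independent, i.e.\ an independent set of $L_3(L,n)$; since two cells in a common row are adjacent, $S$ meets each row at most once and so $|S|\le n$. On the other side $S$ is $3$-dominating, whence $|S|\ge\gamma_3(L_3(L,n))=n$. Thus every $(1,3)$-IDS has cardinality exactly $n$, and by the equivalence of statements (ii) and (iii) in Theorem \ref{Suf.Con.} its cell set is a transversal of $L$; conversely, every transversal of $L$ is a $(1,3)$-IDS of cardinality $n$.

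With the classes of objects identified, the equality is immediate: a family of pairwise disjoint $(1,3)$-IDSs is, as a collection of cell sets, the very same thing as a family of pairwise disjoint transversals of $L$. Hence the largest such family has the same size counted either way, and that common size is on the one hand $\tau(L)$ and on the other hand the extremal quantity the discussion preceding the theorem identifies with $d_{(1,3)}(L_3(L,n))$. This gives $d_{(1,3)}(L_3(L,n))=\tau(L)$.

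For the inequality $d_3(L_3(L,n))\ge d_{(1,3)}(L_3(L,n))$, note that by definition a $(1,3)$-IDS is in particular a $3$-dominating set. Take a maximum family of $t=d_{(1,3)}(L_3(L,n))$ pairwise disjoint $(1,3)$-IDSs; these are $t$ pairwise disjoint $3$-dominating sets. Because any superset of a $3$-dominating set is again $3$-dominating, I can throw all the remaining cells of $V(L_3(L,n))$ into one of these sets and obtain a partition of the vertex set into $t$ three-dominating sets, that is, a $3$-domatic partition with $t$ parts. Therefore $d_3(L_3(L,n))\ge t=d_{(1,3)}(L_3(L,n))$.

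The point that deserves the most care is the bookkeeping distinguishing the two ``domatic'' counts rather than any computation. For $3$-domination the packing and partition versions coincide, since supersets of $3$-dominating sets stay $3$-dominating and leftover cells can always be absorbed. For $(1,3)$-IDSs this fails: a transversal already occupies every row, so it cannot absorb a single extra cell without breaking independence, and the relevant extremal object is genuinely a maximum disjoint family, not a space-filling partition. It is exactly the cardinality-pinning step, forcing each $(1,3)$-IDS to have precisely $n$ cells and hence to be a transversal, that makes this disjoint family coincide with a maximum set of disjoint transversals and lets the equality close; the rest is a direct appeal to Theorem \ref{Suf.Con.} and Proposition \ref{LB.gamma_3(LSG)}.
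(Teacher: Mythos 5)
Your proof is correct and follows essentially the same route as the paper, which derives this theorem directly from the correspondence in Theorem \ref{Suf.Con.} between transversals and $(1,3)$-independent dominating sets of cardinality $n$ when $\gamma_3(L_3(L,n))=n$. Your write-up merely makes explicit the details the paper leaves as a remark (the cardinality-pinning of every $(1,3)$-IDS to exactly $n$, and the absorption of leftover vertices to pass from disjoint $3$-dominating sets to a $3$-domatic partition), so there is nothing to object to.
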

Since for any graph $G$ of order $n$, we have 
\begin{eqnarray}\label{(2.1)}
\label{d_k(G) =< n/ gamma_{k}(G)} d_k(G) \leq \frac
{n}{\gamma_{k}(G)}
\end{eqnarray}
from \cite{KV}, we may conclude the next result.
\begin{thm}
\label{d_3(L_3(L,n)} For any Latin square $L$ of order $n$, $d_3(L_3(L,n)) \leq n$, and equality holds if and only if $\tau(L)=n$.
\end{thm}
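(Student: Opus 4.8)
The plan is to prove the upper bound $d_3(L_3(L,n))\le n$ first, and then derive both directions of the equivalence by combining a counting argument with Theorem~\ref{Suf.Con.}. For the upper bound I would apply the general inequality~\eqref{(2.1)} to the graph $G=L_3(L,n)$, being careful that its order is $n^2$ rather than $n$; this gives $d_3(L_3(L,n))\le n^2/\gamma_3(L_3(L,n))$. Substituting the lower bound $\gamma_3(L_3(L,n))\ge n$ from Proposition~\ref{LB.gamma_3(LSG)} then yields $d_3(L_3(L,n))\le n^2/n=n$ at once.

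For the forward implication, suppose $\tau(L)=n$. Then $L$ decomposes into $n$ pairwise disjoint transversals, which together partition the $n^2$ cells of $L$. By the implication $(iii)\Rightarrow(i)$ of Theorem~\ref{Suf.Con.}, each of these transversals is a $3$-dominating set of $L_3(L,n)$ of cardinality $n$, so the $n$ transversals form a $3$-domatic partition of the vertex set. Hence $d_3(L_3(L,n))\ge n$, and together with the upper bound this forces $d_3(L_3(L,n))=n$.

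For the converse, suppose $d_3(L_3(L,n))=n$, so that there is a $3$-domatic partition $V_1,\dots,V_n$ of the vertex set into $3$-dominating sets. Each part satisfies $|V_i|\ge\gamma_3(L_3(L,n))\ge n$ by Proposition~\ref{LB.gamma_3(LSG)}, while $\sum_{i=1}^{n}|V_i|=n^2$; the pigeonhole principle then forces $|V_i|=n$ for every $i$. Applying the implication $(i)\Rightarrow(iii)$ of Theorem~\ref{Suf.Con.} to each $V_i$ shows that each is a transversal, and since the $V_i$ are pairwise disjoint they furnish $n$ disjoint transversals, so $\tau(L)=n$.

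The argument is largely bookkeeping and I do not expect a deep obstacle. The two points needing care are invoking~\eqref{(2.1)} with the correct order $n^2$ of the Latin square graph (the symbol $n$ in that general bound denotes the order of the ambient graph, not the Latin square parameter), and the pigeonhole step in the converse, where the lower bound $\gamma_3\ge n$ is exactly what is needed to force every part of a size-$n$ domatic partition to have cardinality precisely $n$.
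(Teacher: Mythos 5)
Your proof is correct and takes essentially the same route the paper intends: the upper bound follows from the inequality $d_k(G)\le |V(G)|/\gamma_k(G)$ applied to the $n^2$-vertex graph $L_3(L,n)$ together with Proposition~\ref{LB.gamma_3(LSG)}, and both directions of the equality case follow from Theorem~\ref{Suf.Con.} combined with the counting/pigeonhole observation. The paper leaves these details implicit (``we may conclude''), and your write-up correctly supplies them, including the point that the order of the ambient graph is $n^2$, not $n$.
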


By two theorems from \cite{MM} and \cite{Wan}, the next theorem is obtained:


\begin{thm}\label{ortho}
For any Latin square $L$ of order $n$, the following statements are equivalent.

\textbf{i}. $L$ has an orthogonal mate.
\vspace{-0.01cm}

\textbf{ii}. $\tau(L)=n$.
\vspace{-0.01cm}

\textbf{iii}. $\chi(L_3(L,n))=n$.
\end{thm}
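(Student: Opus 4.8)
The plan is to establish the three-way equivalence through the two pairwise equivalences $\mathbf{i}\Leftrightarrow\mathbf{ii}$ and $\mathbf{ii}\Leftrightarrow\mathbf{iii}$. Each can be quoted from the cited sources, but I would reconstruct both directly, since the underlying correspondences are exactly what make the three conditions into ``the same datum viewed three ways.''

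For $\mathbf{ii}\Rightarrow\mathbf{i}$, the idea is that an orthogonal mate of $L$ is precisely a labelling of the cells by $1,\dots,n$ whose symbol-classes are transversals of $L$. So, given disjoint transversals $T_1,\dots,T_n$ witnessing $\tau(L)=n$, they partition all $n^2$ cells, and I would define $M=[m_{ij}]$ by $m_{ij}=k$ whenever $(i,j)\in T_k$. Since each $T_k$ meets every row and every column exactly once, $M$ is a Latin square; and since each $T_k$, being a transversal, carries each symbol of $L$ exactly once, the $n^2$ pairs $(\ell_{ij},m_{ij})$ are pairwise distinct, so $M$ is an orthogonal mate. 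Conversely, for $\mathbf{i}\Rightarrow\mathbf{ii}$, if $M$ is an orthogonal mate then for each symbol $k$ the set $T_k=\{(i,j)\mid m_{ij}=k\}$ meets every row and column once (as $M$ is Latin) and, by orthogonality, carries $n$ distinct $L$-symbols, hence is a transversal; the $n$ disjoint sets $T_k$ exhaust the cells, giving $\tau(L)=n$.

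For $\mathbf{ii}\Leftrightarrow\mathbf{iii}$, I would first read off from the adjacency rule that a set of cells is independent in $L_3(L,n)$ exactly when no two of its cells share a row, a column, or a symbol; that is, the independent sets are precisely the partial transversals of $L$. Because each row $\{i\}\times[n]$ is a clique of size $n$, we get $\chi(L_3(L,n))\ge\omega(L_3(L,n))\ge n$, so achieving equality in $\mathbf{iii}$ means partitioning the $n^2$ vertices into exactly $n$ independent sets. Every partial transversal has at most $n$ cells, so covering $n^2$ cells with $n$ colour classes forces each class to have exactly $n$ cells, i.e. to be a full transversal. Thus a proper $n$-colouring and a decomposition of $L$ into $n$ disjoint transversals are the same object, and $\chi(L_3(L,n))=n$ holds iff $\tau(L)=n$.

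The routine parts (checking that $M$ is Latin, that symbol-classes are transversals, that partial transversals are the independent sets) are mechanical. The one step needing genuine care, and the place I expect the main obstacle, is the counting argument in $\mathbf{ii}\Leftrightarrow\mathbf{iii}$: it is the combination of the clique lower bound $\chi\ge n$ with the size bound $\lvert T\rvert\le n$ on partial transversals that forbids any colour class of size strictly below $n$, and it is exactly this rigidity that promotes an abstract proper colouring into an honest transversal decomposition rather than merely a partition into short partial transversals.
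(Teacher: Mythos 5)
Your proof is correct. The paper itself gives no argument for this theorem: it is stated as an immediate consequence of two results quoted from the cited theses/surveys (\cite{MM} and \cite{Wan}), so there is no ``paper proof'' to match step for step. What you supply is a complete, self-contained derivation of both equivalences, and every step checks out: the bijection between orthogonal mates and transversal decompositions (symbol-classes of the mate are transversals of $L$, and conversely a transversal decomposition defines a mate by labelling each cell with the index of the transversal containing it) is the standard one, and your colouring argument correctly identifies independent sets of $L_3(L,n)$ with partial transversals, uses the row-clique to get $\chi\ge n$, and uses the size bound $|T|\le n$ on partial transversals together with the count $n\cdot n=n^2$ to force every colour class of a proper $n$-colouring to be a full transversal. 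The only thing your write-up buys beyond the paper is transparency: the paper's reader must chase two external references, whereas your version makes the theorem elementary and verifiable in place. One small presentational remark: since $\mathbf{i}\Leftrightarrow\mathbf{ii}$ and $\mathbf{ii}\Leftrightarrow\mathbf{iii}$ are each proved by exhibiting an explicit bijection of witnesses, you could note that all three conditions are witnessed by literally the same combinatorial object (a partition of the cells into $n$ transversals), which is the ``same datum viewed three ways'' point you gesture at in your opening paragraph.
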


So, the next theorem can be obtained by Theorems \ref{d_3(L_3(L,n)}  and \ref{ortho}.

\begin{thm}\label{ortho and d_3}
Let $L$ be a Latin square of order $n$. Then the following statements are equivalent.

\textbf{i}. $L$ has an orthogonal mate.
\vspace{-0.01cm}

\textbf{ii}. $\tau(L)=n$.
\vspace{-0.01cm}

\textbf{iii}. $\chi(L_3(L,n))=n$.
\vspace{-0.01cm}

\textbf{iv}. $d_3(L_3(L,n))=n$.
\end{thm}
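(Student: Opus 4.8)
The plan is to obtain the four-way equivalence by stitching together the two theorems named in the sentence preceding the statement, both of which are already proved. First I would cite Theorem~\ref{ortho}, which gives directly that statements \textbf{i}, \textbf{ii}, and \textbf{iii} are mutually equivalent: the existence of an orthogonal mate, the condition $\tau(L)=n$, and the chromatic equality $\chi(L_3(L,n))=n$ all coincide. This settles three of the four statements at once, with the common pivot being $\tau(L)=n$.

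It then remains only to fold \textbf{iv} into this chain. For that I would invoke Theorem~\ref{d_3(L_3(L,n)}, whose conclusion is precisely $d_3(L_3(L,n))\leq n$ with equality if and only if $\tau(L)=n$. The equality clause is a biconditional, so it reads as the statement that $d_3(L_3(L,n))=n$ holds exactly when $\tau(L)=n$; that is, \textbf{iv} is equivalent to \textbf{ii}. Concatenating this with the equivalences from the first paragraph, every one of the four statements is equivalent to $\tau(L)=n$, and hence to each of the others.

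There is no genuine obstacle at this level, since all of the mathematical content is packed into the two ingredient theorems: Theorem~\ref{ortho} (itself drawn from \cite{MM} and \cite{Wan}) and Theorem~\ref{d_3(L_3(L,n)} (which rests on the inequality (\ref{(2.1)}) from \cite{KV} together with Theorem~\ref{d_3(L_3(L,n))=tau (L)} relating the $3$-domatic number to $\tau(L)$). The only point requiring care is to confirm that the shared link of the two equivalence chains is literally the same condition $\tau(L)=n$, which it is; this is what allows the two chains to be spliced without any gap and yields the full equivalence of \textbf{i}--\textbf{iv}.
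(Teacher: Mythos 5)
Your proposal is correct and follows exactly the route the paper takes: the paper also derives this theorem by combining Theorem~\ref{ortho} (equivalence of \textbf{i}, \textbf{ii}, \textbf{iii}) with Theorem~\ref{d_3(L_3(L,n)} (equivalence of \textbf{ii} and \textbf{iv} via the ``equality holds if and only if $\tau(L)=n$'' clause), splicing the two chains at the common condition $\tau(L)=n$.
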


We know from \cite{Wan} that for any group $G$ of order $n$, $\tau(L_{G})\geq 1$ 
implies that  $\tau(L_{G})=n$. Now since the diagonal of the table of an abelian 
group $G$ of odd order $n$ is a transversal, hence $\tau(L_{G})=n$. Also, we know 
that every Latin square of even order has an even number of transversals \cite{K}, 
and for any group $G$ of order $2n$ which has an unique element of order $2$, 
$\tau(L_G)=0$ \cite{Euler}. The next theorem states that every $2$-step type Latin square of order $2^n\geq 4$ has a decomposition into disjoint transversals.

\begin{thm}\label{tau=2^n}
For any $2$-step type Latin square $L$ of order $2^n\geq 4$, $\tau(L)=2^n$.
\end{thm}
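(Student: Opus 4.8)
The plan is to use the equivalence recorded in Theorem~\ref{ortho and d_3}: since $\tau(L)=n$ is the same as $L$ admitting a decomposition into $n$ disjoint transversals, it suffices to exhibit $2^{n}$ pairwise disjoint transversals of $L$ (equivalently, an orthogonal mate). First I would fix coordinates adapted to the $2$-step structure. Writing $m=2^{n-1}$, index the rows and columns by pairs $(I,a)$ and $(J,b)$ with $I,J\in\mathbb{Z}_{m}$ (the block indices) and $a,b\in\mathbb{Z}_{2}$ (the position inside a block), and index the symbols by $\mathbb{Z}_{m}\times\mathbb{Z}_{2}$, the first coordinate being the symbol class $i+j\bmod m$ guaranteed by the definition. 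Relabelling the two symbols of each class and using that a $2\times2$ Latin subsquare has exactly two orientations, one checks that $L$ takes the normal form
\[
\ell_{(I,a),(J,b)}=\bigl(I+J \bmod m,\; a+b+\theta_{I,J}\bmod 2\bigr),
\]
for a function $\theta:\mathbb{Z}_{m}\times\mathbb{Z}_{m}\to\mathbb{Z}_{2}$ recording the orientation of each block. In these coordinates a transversal is a choice of one cell per row whose columns and whose symbols each run over all of $\mathbb{Z}_{m}\times\mathbb{Z}_{2}$.

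Next I would build the transversals at two levels. At the block level, in each block-row $I$ I select the two blocks $(I,I+s)$ and $(I,I+s')$ lying on two broken diagonals of the cyclic pattern, with shifts $s,s'\in\mathbb{Z}_{m}$ of \emph{opposite parity}. Because $m$ is even, the map $I\mapsto 2I$ hits each residue of a fixed parity exactly twice, so the two diagonals together meet every symbol class $c\equiv I+J\pmod m$ in exactly two cells, and every block-row and block-column in exactly two cells; this is precisely the coarse balance a transversal needs. Inside each chosen block I then pick one of its two cells, that is a sub-row $a$ and sub-column $b$; row- and column-distinctness force the two sub-rows used in block-row $I$ to be $\{0,1\}$ and, dually, the two sub-columns in each block-column to be $\{0,1\}$, which can be arranged by a consistent choice. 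Sliding $(s,s')$ and the sub-cell choices through a complete system of representatives then produces $2^{n}$ candidate transversals that are, by construction, pairwise disjoint and exhaust all cells.

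The one condition that is not automatic is symbol-distinctness, and this is the step I expect to be the main obstacle. After the balancing above, each class $c$ is met by exactly two chosen cells, coming from the block-rows $I$ and $I+m/2$ that solve $2I\equiv c-s$, and carrying the symbols $a+b+\theta_{I,I+s}$ and $a'+b'+\theta_{I+m/2,\,I+m/2+s}$. Forcing these to be the two distinct symbols of class $c$ is a system of $\mathbb{Z}_{2}$-linear constraints on the sub-cell choices whose coefficients involve the differences $\theta_{I,I+s}-\theta_{I+m/2,\,I+m/2+s}$, and the heart of the proof is to show this system is solvable simultaneously across the whole family of $2^{n}$ transversals. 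Here one must genuinely exploit the freedom in the shifts $s,s'$ and in the block orientations, since it is exactly at this point that cyclic-type configurations (where such a selection can be impossible, as for a group with a unique involution) threaten the argument; ruling them out is where the power-of-two hypothesis on the order must do its work. I would therefore devote the bulk of the argument to a careful bookkeeping of these $m/2$-coupled constraints, and, should a uniform choice prove elusive, fall back on an induction on $n$ that peels a factor $2$ off $m$ and reuses a transversal decomposition of the order-$2^{n-1}$ quotient square.
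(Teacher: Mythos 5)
Your write-up is a plan rather than a proof: you yourself flag the symbol-distinctness constraints as ``the main obstacle'' and ``the heart of the proof,'' and then you do not resolve them. The set-up is sound as far as it goes --- the normal form $\ell_{(I,a),(J,b)}=(I+J \bmod m,\ a+b+\theta_{I,J} \bmod 2)$, the choice of two broken block-diagonals with shifts of opposite parity, and the count showing each symbol class, block-row and block-column is met exactly twice --- but all of the difficulty is concentrated in the $\mathbb{Z}_2$-linear system that couples, for each class $c$, the two chosen cells sitting in block-rows $I$ and $I+m/2$ through the orientation differences $\theta_{I,I+s}-\theta_{I+m/2,I+m/2+s}$. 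You give no argument that this system is consistent for even a single pair $(s,s')$, let alone simultaneously for a family of $2^n$ pairwise disjoint selections, and it is precisely here that the hypothesis ``order a power of two'' has to enter. Your declared fallback is also not viable as stated: the ``order-$2^{n-1}$ quotient square'' obtained by collapsing each $2\times 2$ block to its symbol class is the cyclic square $I+J \bmod 2^{n-1}$, which has even order and hence (being the Cayley table of a cyclic group of even order) has no transversal at all, so there is no transversal decomposition of that quotient to reuse.

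For comparison, the paper argues by induction on $n$ in the opposite direction. The base case of order $4$ is settled by exhibiting four disjoint transversals explicitly. For the inductive step, the order-$2^{n+1}$ square is regrouped into a $2\times 2$ array of order-$2^n$ Latin subsquares $A_{ij}$, with the diagonal blocks again of $2$-step type on one half of the symbols and the off-diagonal blocks equal to the image of a diagonal block under the symbol bijection $\sigma(i)=i+2^n$; a transversal of the diagonal block, cut into its top and bottom halves and recombined with its $\sigma$-image placed in the off-diagonal blocks, produces two disjoint transversals of $L$, and the induction hypothesis supplies $2^n$ disjoint transversals of the block, giving $2^{n+1}$ in all. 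If you want to salvage your coordinates, the realistic route is to make that regrouping precise there (pair block indices of equal parity, so the parity of $I+J$ plays the role of the $2\times 2$ block structure) rather than to attack the $\theta$-constraint system head on.
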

\begin{proof}
Our proof is presented by induction on $n\geq 2$. For $n=2$, Figure \ref{fig:3} shows the only $2$-step type Latin square (up to isomorphism) of order $4$, and its four transversals. 

\begin{figure}[h]
    \centering
    \includegraphics[width=25mm, height=25mm]{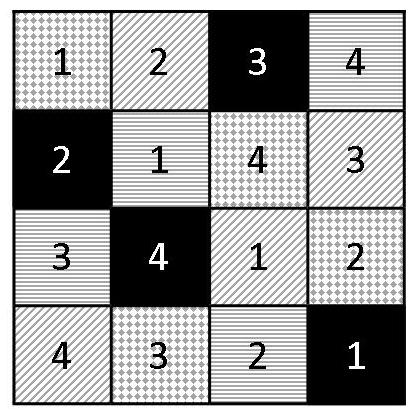}
    \caption{$4$ disjoint transversals of a $2$-step type Latin square of order $4$}
    \label{fig:3}
\end{figure}
So, we assume that $L$ is a $2$-step type Latin square
of order $2^{n+1}>4$, and every $2$-step type Latin square of order $2^n$ has $2^n$ disjoint transversals.
We will prove $\tau(L)=2^{n+1}$.
Since $L$ is also a $2^n$-step type Latin square, it can be represented by a matrix of $2^n\times 2^n$ blocks
$A_{ij}$ as follows 
\begin{equation*}
L=\left(\begin{array}{cc}
A_{11} & A_{12}\\
A_{21} & A_{22}
\end{array}\right)
\end{equation*}
where $A_{11}=A_{22}$ is a $2$-step type Latin square on the symbol set $[n]=\{1,2,\cdots,2^n\}$,
and $A_{12}=A_{21}$ is a $2$-step type Latin square on the symbol set $2^n+[n]=\{2^n+i~|~i\in [n]\}$,
and the $(i,j)$-entry in $A_{12}$ is the sum of the $(i,j)$-entry in $A_{11}$ and $2^n$. In fact,
for the bijection $\sigma(i)=i+2^n$ from $[n]$ to $2^n+[n]$, $A_{12}=\sigma(A_{11})$. Let also 
\begin{equation*}
\begin{array}{cc}
A_{11}=(A_{11}',A_{11}'') & A_{12}=(A_{12}',A_{12}'')\\
A_{21}=(A_{21}',A_{21}'') & A_{22}=(A_{22}',A_{22}'')
\end{array}
\end{equation*}
where $A_{ij}',A_{ij}''$ are $2^{n-1}$-by-$2^n$ Latin rectangles. Now let $T$ be a transversal in $A:=A_{11}=A_{22}$. $T_{11}$ and $T_{22}$
are the name of $T$ in $A_{11}$ and $A_{22}$, respectively. Let $T_{11}=(T_{11}',T_{11}'')$ and 
$T_{22}=(T_{22}',T_{22}'')$, where $T_{ii}'=T_{ii}\cap A_{ii}'$ and $T_{ii}''=T_{ii}\cap A_{ii}''$. Then $(\sigma (T_{11}''),\sigma (T_{11}'))$ is a transversal in $A_{12}=A_{21}$. 
Let $T_{12}$ and $T_{21}$ be the name of $(\sigma (T_{11}''),\sigma (T_{11}'))$ in 
$A_{12}$ and $A_{21}$, respectively. Since 
\begin{equation*}
\begin{array}{cc}
T_1=T_{11}'\cup T_{22}''\cup T_{12}''\cup T_{21}' & \\
T_2=T_{11}''\cup T_{22}'\cup T_{12}'\cup T_{21}''
\end{array}
\end{equation*}
are two disjoint transversals in $L$, and $\tau(A)=2^n$, we obtain $\tau(L)=2^{n+1}$. 
See Figures \ref{fig:4}, \ref{fig:5} and \ref{fig:6} for example.
\begin{figure}[h]
    \centering
    \includegraphics[width=23mm, height=23mm]{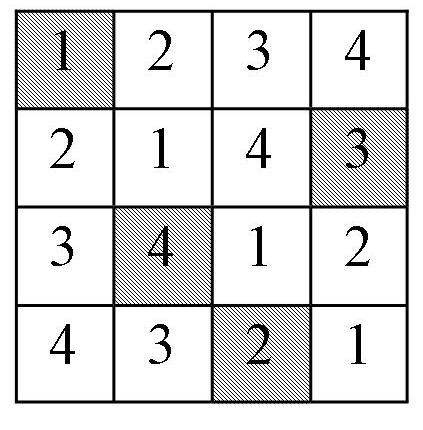}
    \caption{$A=A_{11}=A_{22}$, and a transversal}
    \label{fig:4}
\end{figure}
\begin{figure}[h]
    \centering
    \includegraphics[width=52mm, height=47mm]{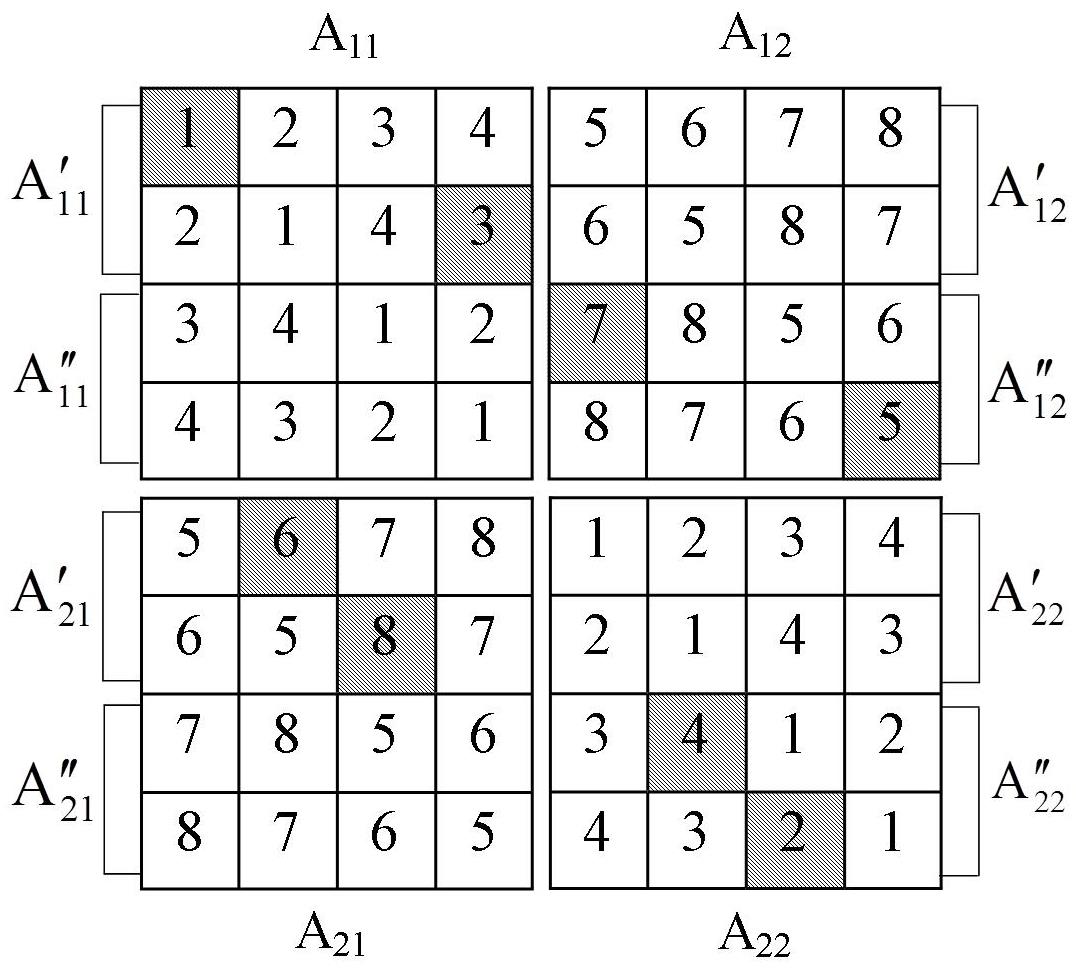}
    \caption{Partition of a $2$-step type Latin square of order $2^{3}$}
    \label{fig:5}
\end{figure}
\begin{figure}[h]
    \centering
    \includegraphics[width=42mm, height=42mm]{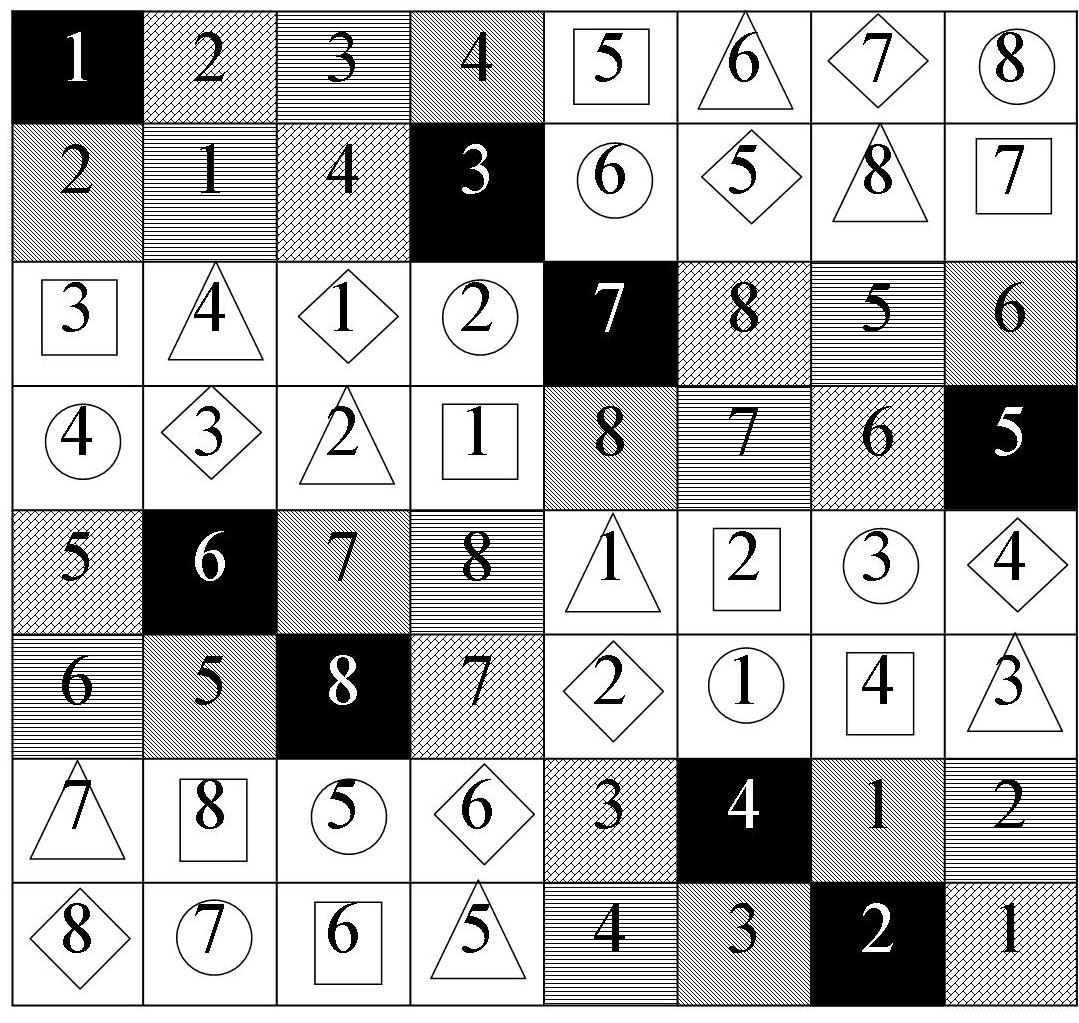}
    \caption{A $2$-step type Latin square of order $2^{3}$ and its transversals}
    \label{fig:6}
\end{figure}
\end{proof}

As a consequence of Theorems \ref{tau=2^n} and \ref{ortho and d_3}, next result is appearanced.

\begin{cor}\label{tau=2^n}
For any $2$-step type Latin square $L$ of order $2^n\geq 4$, $d_3(L_3(L_G,2^n))=2^n$.
\end{cor}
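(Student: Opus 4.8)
The plan is to derive Corollary \ref{tau=2^n} as a direct consequence of the two theorems it cites, requiring essentially no new combinatorial work beyond a careful chaining of equivalences. First I would invoke Theorem \ref{tau=2^n} (the immediately preceding one, which we may assume proved), which asserts that for any $2$-step type Latin square $L$ of order $2^n\geq 4$ we have $\tau(L)=2^n$. The entire content of the corollary is the translation of this statement about transversal numbers into a statement about the $3$-domatic number of the associated Latin square graph.

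The key step is to apply Theorem \ref{ortho and d_3}, which lists four equivalent conditions for a Latin square $L$ of order $n$; in particular it states that $\tau(L)=n$ if and only if $d_3(L_3(L,n))=n$. Applying this with $n$ replaced by $2^n$, the equality $\tau(L)=2^n$ supplied by Theorem \ref{tau=2^n} immediately yields $d_3(L_3(L,2^n))=2^n$. I would then simply note that this is exactly the claimed conclusion, so the proof is complete.

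The only subtlety worth flagging is notational rather than mathematical: the corollary as stated writes $d_3(L_3(L_G,2^n))$, with a subscript $G$ suggesting a group table, whereas the hypothesis concerns a general $2$-step type Latin square $L$. Since no group structure is used in either cited theorem, the honest statement is $d_3(L_3(L,2^n))=2^n$, and I would silently read $L_G$ as $L$ (or remark that the subscript is a typographical carryover). There is no genuine obstacle here—the one-line derivation goes through verbatim—so the ``hard part'' is merely ensuring the order parameter is consistently set to $2^n$ when specializing the general equivalence of Theorem \ref{ortho and d_3}. A complete write-up would read: by Theorem \ref{tau=2^n} we have $\tau(L)=2^n$, and hence by the equivalence of parts \textbf{ii} and \textbf{iv} of Theorem \ref{ortho and d_3} applied to the Latin square $L$ of order $2^n$, we obtain $d_3(L_3(L,2^n))=2^n$, as desired.
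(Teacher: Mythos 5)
Your proof is correct and is exactly the paper's argument: the paper derives the corollary "as a consequence of Theorems \ref{tau=2^n} and \ref{ortho and d_3}," i.e., $\tau(L)=2^n$ combined with the equivalence of parts \textbf{ii} and \textbf{iv}. Your observation that the subscript $G$ in $L_G$ is a typographical carryover is also well taken.
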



\section{Quasi-transversals}
Using proven theorems about transversals of Latin squares, there are many Latin squares which have no transversal, but they have a set of entries of order $n+1$ which includes exactly two entry from one row (and one column)  and exactly one entry from each other row (and column) and also all symbols in the set except one occure one time. This guides our to the next definition.
\begin{defn}
\emph{A} quasi-transversal \emph{in a Latin square of order $n$ is a set of entries of order $n+1$ which
includes exactly two entry from one row (and one column)  and one entry from each other row (and column) and all symbols in the set, except one, occure one time. The maximum number of disjoint quasi-transversals in a Latin square $L$ is called its} quasi-transversal number \emph{and denoted by $\tau^{qua}(L)$.}
\end{defn}
It is obviouse that for any a Latin square $L=(\ell_{i,j})$ of order $n$, $\tau^{qua}(L)\leq \lfloor\frac{n^2}{n+1}\rfloor$. Next proposition states when a quasi-transversal in a Latin square is a 3-dominating set in its Latin square graph. 

\begin{prop}
\label{quasi-transversal=3DN n+1}
Every quasi-transversal in a Latin square of order $n\geq 3$ is corresponded with a 3-dominating set of order $n+1$ in its Latin square graph.
\end{prop}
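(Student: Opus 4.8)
The plan is to fix the set $Q$ of $n+1$ cells forming the quasi-transversal and to check directly that $Q$ satisfies the definition of a $3$-dominating set of $L_3(L,n)$, i.e.\ that every cell $v=(i,j)\notin Q$ has at least three neighbours inside $Q$. First I would record the three defining features of $Q$ in the form most useful here: every row contains at least one cell of $Q$ (exactly one row contains two, the remaining $n-1$ rows contain one); every column likewise contains at least one cell of $Q$; and every symbol of $[n]$ occurs in $Q$, exactly one symbol occurring twice and the other $n-1$ once, which is precisely what accounts for the $n+1$ cells.

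Next, given an arbitrary $v=(i,j)\notin Q$, I would exhibit three neighbours of $v$ in $Q$, one for each adjacency rule of $L_3(L,n)$. Since row $i$ contains a cell of $Q$ and $v\notin Q$, that cell has the form $r=(i,j_1)$ with $j_1\neq j$, and it is adjacent to $v$ by the same-row rule. Likewise column $j$ supplies a cell $c=(i_1,j)$ with $i_1\neq i$, adjacent to $v$ by the same-column rule. Finally, the symbol $s=\ell_{ij}$ occurs in $Q$, say at $t=(p,q)$ with $\ell_{pq}=s$; since $v\notin Q$ we have $t\neq v$, and $t$ is adjacent to $v$ by the same-symbol rule.

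The crux is to show that $r$, $c$ and $t$ are three \emph{distinct} cells, and this is exactly where the Latin property enters. The cells $r$ and $c$ lie in different rows ($i\neq i_1$), so $r\neq c$. For $t$, if $p=i$ then $\ell_{iq}=s=\ell_{ij}$ would force $q=j$, because symbols are distinct along a row, giving $t=v\notin Q$, a contradiction; hence $p\neq i$ and $t\neq r$. Symmetrically, using distinctness of symbols down a column, $q=j$ would force $p=i$ and again $t=v$, so $q\neq j$ and $t\neq c$. Thus $r,c,t$ are pairwise distinct neighbours of $v$ lying in $Q$.

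I expect this distinctness argument to be the only real obstacle; once it is in place we get $\mid N(v)\cap Q\mid\geq 3$ for every $v\notin Q$, so $Q$ is a $3$-dominating set, and since $\mid Q\mid=n+1$ by definition, the proposition follows. The hypothesis $n\geq 3$ is convenient because it guarantees that $L_3(L,n)$ is not complete (as noted in the excerpt, $L_3(L,n)=K_{n^2}$ only for $n=1,2$), so no degenerate case interferes with the counting above.
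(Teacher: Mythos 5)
Your argument is correct as far as it goes, and it is in fact a more careful treatment of the stated implication than the paper gives: the paper dismisses the direction you prove (quasi-transversal $\Rightarrow$ $3$-dominating set of cardinality $n+1$) in a single sentence as ``clear'', and your distinctness check --- that the row-neighbour, column-neighbour and symbol-neighbour of a cell $v\notin Q$ are pairwise distinct because any coincidence would force the symbol cell to equal $v$ itself, by the Latin property along a row or column --- is exactly the verification that sentence is hiding. However, be aware that the paper's proof of this proposition is almost entirely devoted to the \emph{converse}: starting from an arbitrary $3$-dominating set $T$ of cardinality $n+1$, it introduces the row-, column- and symbol-counts $r_i$, $c_i$, $s_i$ with $\sum_i r_i=\sum_i c_i=\sum_i s_i=n+1$ and runs a counting argument (bounding the number of indices with $r_i=0$ and deriving a contradiction from a cell that cannot be $3$-dominated) to show that no row, column or symbol can be missed, so that $T$ is forced to be a quasi-transversal. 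That converse is the direction actually used later (the corollary on $q$-step Latin squares, where $\gamma_3(L_3(L,n))=n+1$ is converted into the existence of a quasi-transversal), so if the proposition is read as the equivalence the paper evidently intends, your proposal covers only the easy half and omits the substantive counting argument.
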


\begin{proof}
For $n=3$, there is only one isotopy class of Latin squares of order $3$, and it has a quasi-transversal (see Figure \ref{isotopy class, n=3}).
\begin{figure}[h]
    \centering
    \includegraphics[width=20mm, height=20mm]{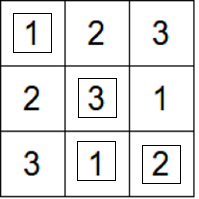}
    \caption{The only one isotopy class of Latin squares of order $3$}
    \label{isotopy class, n=3}
\end{figure}

So, let $n\geq 4$. It is clear that every quasi-transversal in a Latin square of order $n\geq 3$ is a 3-dominating set of order $n+1$ in its Latin square graph. Let $T$ be a $3$DS in $L_3(L,n)$ of cardinality $n+1\geq 5$, and let
\begin{equation*}
\begin{array}{ccc}
R_i &=&\{(i,j)  \in V(L_3(L,n))~|~(i,j)\in T \},\\
C_i &=&\{(j,i)  \in V(L_3(L,n))~|~(j,i)\in T \},\\
S_i &=&\{(p,q) \in V(L_3(L,n))~|~\ell_{pq}=i \}
\end{array}
\end{equation*}
be three sets of the cardinalities $r_i$, $c_i$ and $s_i$, respectively. Then
\begin{equation*}
\begin{array}{ccc}
T&=& R_1\cup R_2\cup \cdots R_n\\
  &=& C_1\cup C_2\cup \cdots C_n\\
  &=&S_1\cup S_2\cup \cdots S_n,
\end{array}
\end{equation*}  
and so 
\[
\sum_{i=1}^{n}r_i=\sum_{i=1}^{n}c_i=\sum_{i=1}^{n}s_i=n+1.
\]
We will show that the set $T'=\{\ell_{ij}~|~(i,j)\in T\}$ is a quasi-transversal in $L$. For this aim, it is sufficient to prove $r_i\neq 0$ for any $1\leq i \leq n$, by the concept of orthogonal mate.
Let $I=\{i~|~r_i=0\}$ be a set of cardinality $\ell\geq 1$. Without loss of generality, assume that $|\{i~|~c_i=0\}|\geq \ell$ and $|\{i~|~s_i=0\}|\geq \ell$. Let $J=\{i~|~c_i=0\}$. This implies that 
if $\alpha_{ij}$ is the symbol in the intersection of the $i$-th row and the $j$-th column for $i\in I$ and $j\in J$, then $s_{\alpha_{ij}}\geq 3$. Let $K=\{t~|~ \alpha_{ij}\in T \mbox{ and } \alpha_{ij} \mbox{ lies in $t$-th column, for } i\in I, ~j\in J\}$. Trivally, $|K|\geq \ell$. So, if $K'=\{i~|~1\leq i \leq n\}-J\cup K$, then 
\begin{equation*}
\begin{array}{lll}
n+1 &=     & |T|\\
       &=     & \sum_{i\in K'}c_i + \sum_{i\in K}c_i \\
       &\geq & (n-2\ell)+3\ell \\
       &=     & n+\ell ,
\end{array}
\end{equation*}
which implies $\ell=1$. So, without loss of generality, consider $r_1=c_1=s_{\alpha_1}=0$, for some $\alpha_1\neq 1$, $\ell_{1,j}=\ell_{j,1}=j$ for $1\leq j\leq n$ and  $c_1\leq c_2\leq \cdots \leq c_n$. Since $c_j\leq 3$ for $2\leq j\leq n$, we obtain $(c_2,c_3,\cdots,c_n)=(1,1,\cdots,1,3)$ or
$(c_2,c_3,\cdots,c_n)=(1,\cdots,1,2,2)$.
This gives that $r_i\leq 2$ for each $i$. Since at least three vertices from $T$ is needed to $3$-tuple dominating of
the vertex $(\alpha_1,1)$, the condition $r_1=c_1=s_{\alpha_1}=0$ implies that $r_{\alpha_1}\geq 3$, a contradiction. Therefore $T'$ is a quasi-transversal in $L$ with cardinality $n+1$.
\end{proof}
The next theorem is presented a family of Latin squares of order $n$ which has no transversal but has a quasi-transversal, which implies that the 3-domination number of its Latin square graph is $n+1$. First we recall a theorem from \cite{WCS}.


\begin{thm}\label{n even k odd}
\emph{\cite{WCS}}
Suppose that $q$ and $k$ are odd integers and $m$ is even. No $q$-step type
Latin square of order $mq$ possesses a $k$-transversal.
\end{thm}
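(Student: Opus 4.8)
The plan is to use a double-counting parity argument on the block structure, generalizing the classical fact that a cyclic Latin square of even order has no transversal (which is the case $q=k=1$). Write $n=mq$ and index each row by a pair $(i,a)$, where $i\in\mathbb{Z}_m$ records which block-row it lies in and $a\in\{1,\dots,q\}$ its position inside that block-row; index columns by pairs $(j,b)$ analogously. By the defining property of a $q$-step type square, the symbol occupying any cell in block $A_{ij}$ belongs to the symbol class indexed by $(i+j)\bmod m$, and there are exactly $q$ symbols in each of these $m$ classes. The quantity I would track is the ``diagonal index'' $(i+j)\bmod m$ attached to each cell, which is precisely its symbol class.

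First I would record the counting identities forced on a putative $k$-transversal $T$. Since $T$ meets each of the $q$ rows of block-row $i$ in exactly $k$ cells, block-row $i$ contributes $qk$ cells of $T$; likewise each block-column contributes $qk$ cells; and since each of the $q$ symbols in class $g$ occurs exactly $k$ times, class $g$ accounts for $qk$ cells. I would then evaluate, modulo $m$, the sum $\Sigma=\sum_{c\in T}\bigl((i+j)\bmod m\bigr)$ over all $nk$ cells of $T$ in two different ways.

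Computing $\Sigma$ from the row/column data gives $\Sigma\equiv qk\sum_{i=0}^{m-1} i + qk\sum_{j=0}^{m-1} j = qk\,m(m-1)\equiv 0\pmod m$, because each block-row and block-column supplies $qk$ cells and the factor $m$ annihilates the sum. Computing the same $\Sigma$ from the symbol data, each class $g$ supplies $qk$ cells of diagonal index $g$, so $\Sigma\equiv qk\sum_{g=0}^{m-1} g = qk\,\tfrac{m(m-1)}{2}\pmod m$. The decisive point is the arithmetic of this last expression: since $m$ is even, $\tfrac{m(m-1)}{2}=\tfrac m2(m-1)$ is an \emph{odd} multiple of $\tfrac m2$, whence $\tfrac{m(m-1)}{2}\equiv\tfrac m2\pmod m$, so the symbol evaluation yields $qk\cdot\tfrac m2\pmod m$. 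Equating the two evaluations forces $qk\cdot\tfrac m2\equiv 0\pmod m$, i.e.\ $qk$ even, contradicting that $q$ and $k$ are both odd. Hence no such $T$ exists.

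The step I expect to demand the most care is bookkeeping rather than ideas: pinning down that the diagonal index $(i+j)\bmod m$ of a cell genuinely equals its symbol class (this is exactly the congruence $i+j\equiv i'+j'\pmod m$ in the definition of $q$-step type), and staying disciplined about reducing modulo $m$ throughout so that the two evaluations of the single sum $\Sigma$ are compared legitimately. Everything else collapses to the observation that $\binom{m}{2}$ is an odd multiple of $\tfrac m2$ when $m$ is even, which is precisely the place where the parities of $q$, $k$, and $m$ interact to produce the contradiction.
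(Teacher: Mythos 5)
Your argument is correct. Note that the paper itself gives no proof of this statement --- it is quoted verbatim from the reference \cite{WCS} --- so there is nothing internal to compare against; your double-counting of the block index $(i+j) \bmod m$ over the $nk$ cells of a putative $k$-transversal, once via block-rows and block-columns (giving $qk\,m(m-1)\equiv 0$) and once via symbol classes (giving $qk\,\tfrac{m}{2}(m-1)\equiv qk\,\tfrac{m}{2}$), is sound and is essentially the parity argument used in that reference, generalizing Maillet's classical proof for transversals.
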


\begin{thm}\label{gamma=n+1}
For any $q$-step type Latin square $L$ of even order $n=mq$ where the $q\times q$ sublatin squares are cyclic, $m$ is even and $q$ is odd,
 $\gamma_3(L_3(L,n))=n+1$.
\end{thm}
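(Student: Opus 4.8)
The plan is to establish the two bounds $\gamma_3(L_3(L,n))\geq n+1$ and $\gamma_3(L_3(L,n))\leq n+1$ separately. For the lower bound I would first apply Theorem \ref{n even k odd} with $k=1$: since $q$ is odd, $m$ is even, and $L$ is a $q$-step type Latin square of order $mq$, the square $L$ possesses no $1$-transversal, i.e.\ no transversal. Proposition \ref{LB.gamma_3(LSG)} already gives $\gamma_3(L_3(L,n))\geq n$, so it remains only to rule out equality. If we had $\gamma_3(L_3(L,n))=n$, then a $\gamma_3$-set would be a $3$-dominating set of cardinality exactly $n$, and the implication $\mathbf{i}\Rightarrow\mathbf{iii}$ of Theorem \ref{Suf.Con.} would produce a transversal of $L$, contradicting the previous sentence. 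Hence $\gamma_3(L_3(L,n))\geq n+1$.

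For the upper bound it suffices, by Proposition \ref{quasi-transversal=3DN n+1}, to exhibit a single quasi-transversal of $L$, since such a set corresponds to a $3$-dominating set of cardinality $n+1$ (the degenerate case $n=2$ is checked directly, as there $L_3(L,n)=K_4$ and $\gamma_3(K_4)=3$). I would build the quasi-transversal in two moves. First, produce a permutation $\pi$ of $[n]$ such that the multiset of symbols $\{\ell_{i,\pi(i)}\}$ omits exactly one symbol and repeats exactly one symbol, every other symbol occurring once; call this a \emph{near-complete mapping} of $L$. The $n$ cells $\{(i,\pi(i))\}$ then meet every row and every column exactly once. Second, adjoin one extra cell carrying the omitted symbol: this cell lies in a unique row and a unique column, each of which previously contained exactly one chosen cell, so after the addition exactly one row and one column contain two cells, while every symbol now occurs and precisely one of them occurs twice. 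By definition the resulting $n+1$ cells form a quasi-transversal.

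The heart of the argument, and the step I expect to be the main obstacle, is producing the near-complete mapping for an arbitrary $q$-step type square with cyclic blocks. Writing $L$ as an $m\times m$ array of cyclic blocks $A_{IJ}$ whose symbol set depends only on $I+J\pmod m$, I would exploit that each block, being a cyclic Latin square of odd order $q$, admits complete mappings (its broken diagonals), whereas the block index group $\mathbb{Z}_m$, having even order, admits only near-complete mappings. The difficulty is that the naive coordinatewise combination of a near-complete mapping on $\mathbb{Z}_m$ with complete mappings inside the blocks multiplies the defect by $q$: a whole symbol-group gets doubled and another entirely omitted, rather than a single symbol. The real work is therefore to re-route $q-1$ of the excess cells in the doubled group onto $q-1$ of the vacancies in the omitted group, by a sequence of local broken-diagonal swaps, so that exactly one symbol remains doubled and one omitted.

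I would then check that the parity obstruction forced by Theorem \ref{n even k odd} is exactly what prevents collapsing the defect all the way to zero, while the odd order $q$ of the blocks supplies enough room to reduce it to a single unit. Finally, the dependence on the actual block offsets of a general, possibly non-group, $q$-step square should be absorbed into this local rerouting, since the construction uses only the cyclic structure inside each block together with the value $I+J\pmod m$ governing the symbol groups; this is the point at which I would have to argue most carefully that the offsets never obstruct the swaps.
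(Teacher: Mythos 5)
Your lower bound is exactly the paper's: Theorem \ref{n even k odd} with $k=1$ rules out a transversal, and Theorem \ref{Suf.Con.} then forbids a $3$-dominating set of size $n$, so $\gamma_3(L_3(L,n))\geq n+1$. That part is complete and correct.

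The upper bound, however, is where all the content of this theorem lives, and your proposal does not actually deliver it. You reduce the problem to producing a ``near-complete mapping'' of $L$ (a permutation $\pi$ whose symbol multiset $\{\ell_{i,\pi(i)}\}$ omits one symbol and doubles one), and then you yourself identify the construction of that object as ``the main obstacle'': the coordinatewise combination of a near-complete mapping of $\mathbb{Z}_m$ with complete mappings of the cyclic blocks produces a defect of size $q$, not $1$, and the proposed fix --- rerouting $q-1$ excess cells via ``a sequence of local broken-diagonal swaps'' --- is never specified, and you concede you ``would have to argue most carefully that the offsets never obstruct the swaps.'' That is precisely the step that cannot be waved at: whether the swaps can always be performed without reintroducing row, column, or symbol collisions is the whole theorem, and nothing in your sketch establishes it. The paper resolves this by brute force: it writes down explicit coordinates for a $3$-dominating set of cardinality $n+1$, separately for $q=1$ (a shifted diagonal with one extra cell) and for $q>1$ (a seven-part union of index families), so that the verification is a direct, if tedious, check. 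Until you either write out your rerouting procedure and verify it, or exhibit an explicit cell set as the paper does, the upper bound --- and hence the theorem --- remains unproved in your argument.
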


\begin{proof}
Since $\gamma_{3}(L_3(L,n))\geq n+1$, by Theorems \ref{Suf.Con.} and \ref{n even k odd}, it is sufficient to prove $\gamma_{3}(L_3(L,n))\leq n+1$.

\textbf{Case 1}. $q=1$. Since the set 
\begin{equation*}
\begin{array}{lll}
S &  =  &\{(i,i)\mid 1\leq i\leq \frac{n}{2}\}\\
& \cup &\{(i,i+1) \mid \frac{n}{2}+1\leq i\leq n\}\\
& \cup &
\{(\frac{n}{2}+1,\frac{n}{2}+1)\}
\end{array}
\end{equation*}\\
is a $3$-dominating set of $L_3(L,n)$ of the cardinality $n+1$, we obtain $\gamma_{3}(L_3(L,n))=n+1$.
 
\textbf{Case 2}. $q\neq 1$. Since the set
\begin{equation*}
\begin{array}{lll}
S &  =  &\{(qj+2i +1,qj+2i+1)\mid 0\leq i \leq \frac{q-1}{2},0\leq j\leq \frac{m}{2}-1\}\\
& \cup &\{(qj+2i +1+\frac{n}{2},q(j+1)+2i +1+\frac{n}{2})\mid 0\leq i \leq \frac{q-1}{2},0\leq j\leq \frac{m}{2}-2\}\\
& \cup &\{(qj+2i +2,q(j+1)+2i +2\mid 0\leq i \leq \frac{q-3}{2},0\leq j\leq \frac{m}{2}-2\}\\
& \cup &\{(qj+2i +2+\frac{n}{2},qj+2i +2+\frac{n}{2})\mid 0\leq i \leq \frac{q-3}{2},0\leq j\leq \frac{m}{2}-1\}\\
& \cup &\{(2i+2-q+\frac{n}{2},2i +1+\frac{n}{2})\mid 0\leq i \leq \frac{q-3}{2}\}\\
& \cup &\{(2i +1-q+n,2i +4)\mid 0\leq i \leq \frac{q-5}{2}\}\\
& \cup &\{(\frac{n}{2}-1,\frac{n}{2}+q),(n-2,2),(n,1)\}
\end{array}
\end{equation*}\\
is a $3$DS of $L_3(L,n)$ of the cardinality $n+1$, we obtain $\gamma_{3}(L_3(L,n))=n+1$. For example, Figure \ref{Latin square with (n,m,q)=(36,4,9)} shows a 9-step Latin square of order 36 and a quasi-transversal.
\begin{figure}[h]
    \centering
    \includegraphics[width=120mm, height=110mm]{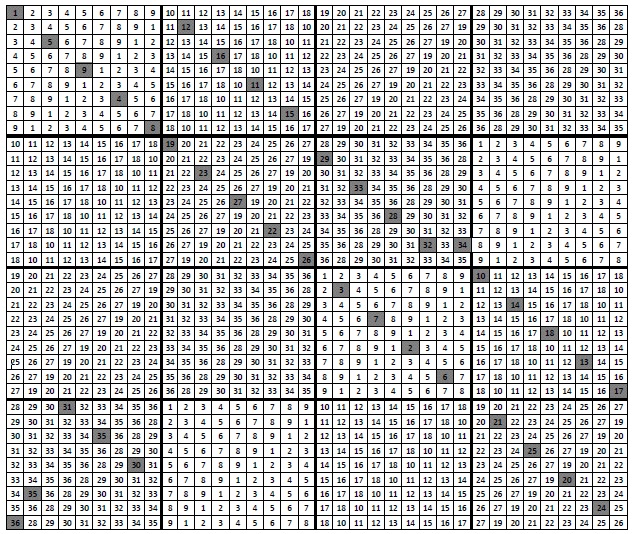}
    \caption{A 3DS of a $9$-step type Latin square with $(n,m,q)=(36,4,9)$}
    \label{Latin square with (n,m,q)=(36,4,9)}
\end{figure}

\end{proof}

\begin{cor}\label{tau_k L_G, G cyclic of order 2n}
For any cyclic group $G$ of even order $n$, $d_3(L_3(L_{G},n))=\lfloor \frac{n^2}{n+1}\rfloor$.
\end{cor}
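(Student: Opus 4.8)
The plan is to prove the two matching inequalities $d_3(L_3(L_G,n))\le\lfloor n^2/(n+1)\rfloor$ and $d_3(L_3(L_G,n))\ge\lfloor n^2/(n+1)\rfloor$ separately, after recording that $n^2=(n+1)(n-1)+1$, so $\lfloor n^2/(n+1)\rfloor=n-1$. The upper bound is immediate. By Remark \ref{1-step} a cyclic Latin square is of $1$-step type, so $L_G$ meets the hypotheses of Theorem \ref{gamma=n+1} with $q=1$ (odd) and $m=n$ (even), whence $\gamma_3(L_3(L_G,n))=n+1$. Since the graph $L_3(L_G,n)$ has $n^2$ vertices, the general bound $d_k(G)\le |V(G)|/\gamma_k(G)$ recalled from \cite{KV} gives $d_3(L_3(L_G,n))\le n^2/(n+1)$, and integrality of $d_3$ forces $d_3(L_3(L_G,n))\le n-1$.

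For the lower bound I would exhibit an explicit $3$-domatic partition of $V(L_3(L_G,n))$ into $n-1$ parts. The size arithmetic fixes the shape of any such partition: every part is a $3$-dominating set, hence has at least $\gamma_3=n+1$ vertices, and $(n-1)(n+1)=n^2-1$, so the partition must consist of $n-2$ parts of size exactly $n+1$ together with one part of size $n+2$. By Proposition \ref{quasi-transversal=3DN n+1} a part of size $n+1$ is $3$-dominating precisely when it is a quasi-transversal, while a part of size $n+2$ is $3$-dominating as soon as it contains a quasi-transversal, since any superset of a $3$-dominating set is again $3$-dominating. Thus it suffices to produce $n-2$ pairwise disjoint quasi-transversals $Q_1,\dots,Q_{n-2}$ whose complement consists of $n+2$ cells forming a $3$-dominating set.

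To construct the $Q_i$ I would use the automorphisms $T_{a,b}\colon (i,j)\mapsto(i+a,\,j+b)$ of $L_3(L_G,n)$: these are graph automorphisms because in a cyclic square $\ell_{i+a,\,j+b}=\ell_{i,j}+a+b$, so each adjacency type (same row, same column, same symbol) is preserved. Starting from the explicit minimum $3$-dominating set $S$ of Case $1$ of Theorem \ref{gamma=n+1}, which is a near-diagonal quasi-transversal with a distinguished doubled row $\rho$, doubled column $\kappa$ and doubled symbol $s$, I would set $Q_i=T_{a_i,b_i}(S)$; then $Q_i$ doubles row $\rho+a_i$, column $\kappa+b_i$ and symbol $s+a_i+b_i$. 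The shift vectors should be chosen so that the $a_i$ are distinct, the $b_i$ are distinct, and the sums $a_i+b_i$ are distinct; this makes every row, every column and every symbol the distinguished one of at most one translate, so no row, column or symbol class is entirely consumed and the $n+2$ leftover cells meet every row, column and symbol. Pairwise disjointness requires only that $(a_i-a_j,\,b_i-b_j)$ avoid the difference set of $S$, and because $S$ is a near-diagonal its difference set is concentrated near the main diagonal and has size $O(n)$ inside $\mathbb{Z}_n\times\mathbb{Z}_n$, so there is ample room to meet all of these constraints simultaneously (as can be verified directly in closed form, in the spirit of the index formulas of Theorem \ref{gamma=n+1}).

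The hard part will be verifying that the leftover $(n+2)$-set is genuinely $3$-dominating, and that this can be arranged together with disjointness. Because $\gamma_3=n+1$ there is essentially no slack: for each vertex $(p,q)$ outside the leftover set one must name three leftover neighbours among the cells of row $p$, of column $q$, and (for even $n$) among the cells carrying the symbol $\ell_{pq}$, and the worst vertices are exactly those whose row-, column- or symbol-line has been most heavily depleted by $Q_1,\dots,Q_{n-2}$. I expect this to be the crux, requiring the shifts $(a_i,b_i)$ to be written explicitly so that the leftover set is described residue-class by residue-class and the three required neighbours can be exhibited in each case; the even order of $G$ enters here precisely through the symbol-neighbour count, each symbol occupying $n$ cells. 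Once the leftover set is shown to be $3$-dominating, the partition $\{Q_1,\dots,Q_{n-2},\,V\setminus\bigcup_i Q_i\}$ has exactly $n-1$ parts, giving $d_3(L_3(L_G,n))\ge n-1$ and hence equality with $\lfloor n^2/(n+1)\rfloor$.
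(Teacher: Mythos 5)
Your upper bound is exactly the paper's: $\gamma_3(L_3(L_G,n))=n+1$ from Remark \ref{1-step} and Theorem \ref{gamma=n+1}, then $d_3\le n^2/(n+1)$ and integrality. For the lower bound your overall plan (shifted copies of a near-diagonal base set, with the size arithmetic $n-2$ parts of size $n+1$ plus one of size $n+2$) is in the spirit of the paper's proof, but the construction is never carried out, and as specified it cannot work. You propose $n-2$ pairwise disjoint translates $T_{a_i,b_i}(S)$ of the Case 1 quasi-transversal $S=\{(i,i):i\le \frac n2\}\cup\{(i,i+1):i>\frac n2\}\cup\{(\frac n2+1,\frac n2+1)\}$. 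Disjointness of $T_{a,b}(S)$ and $T_{a',b'}(S)$ requires $(a-a',b-b')\notin S-S$, and one checks that $S-S$ contains the three \emph{full} diagonals $\{(d,d+e):d\in\mathbb{Z}_n\}$ for $e\in\{-1,0,1\}$: the differences $(i,i)-(i',i'+1)=(d,d-1)$ realize every $d\ne 0$, the doubled row supplies $(0,\pm1)$, and the cell $(\frac n2+1,\frac n2+1)$ fills in $d=\frac n2$ on the main diagonal. Hence the residues $b_i-a_i$ must be pairwise distinct and non-adjacent in $\mathbb{Z}_n$, so at most $\lfloor n/2\rfloor$ pairwise disjoint translates of $S$ exist --- far short of $n-2$ once $n\ge 6$. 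Your observation that the difference set has size $O(n)$ is correct but is exactly what sinks the scheme: the forbidden differences are whole diagonals, not a sparse set with "ample room" around it.

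The paper sidesteps this obstruction by translating not the quasi-transversal $S$ but the $n$-cell set $T_1=\{(i,i):i\le\frac n2\}\cup\{(i,i+1):i>\frac n2\}$, which meets every \emph{row} exactly once (its doubling sits in a column and a symbol, not in a row). Column-translates of a one-cell-per-row set are automatically pairwise disjoint, so the sets $T_j=\{(i,i+j-1),(i+\frac n2,i+\frac n2+j)\mid 1\le i\le\frac n2\}$, $j=1,\dots,n-1$, are disjoint, and each is then patched with one or two extra cells placed in its missing column to form the $3$-dominating set $S_j$; only these patch cells need a separate disjointness check. So the gap is genuine on two counts: the translation of $S$ must be replaced by something like the paper's asymmetric shift of the two half-diagonals, and the patch cells and the $3$-domination of each resulting part must actually be exhibited --- which is the entire content of the corollary's proof, and which your proposal explicitly defers.
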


\begin{proof}
By Remark \ref{1-step} and Theorem \ref{gamma=n+1}, $\gamma_{3}(L_3(L_{G},n))= n+1$. 
Now define
\begin{equation*}
{S}_{j}=\left\{
\begin{array}{ll}
T_j \cup \{(j+\frac{n}{2},j+\frac{n}{2})\}         & \mbox{if }1\leq j\leq \frac{n}{2}, \\
T_j \cup \{(j+\frac{n}{2}+1,j+\frac{n}{2})\}       & \mbox{if }\frac{n}{2}< j < n-1, \\
T_j \cup \{(\frac{n}{2},\frac{n}{2}-1),(1,\frac{n}{2})\} & \mbox{if } j=n-1,
\end{array}
\right.
\end{equation*}
where $1\leq j \leq {n}-1$, and $T_j=\{(i,i+j-1),(i+\frac{n}{2},i+\frac{n}{2}+j)~|~1\leq i \leq \frac{n}{2} \}$. Since the sets $S_j$ are $n-1$ disjoint 
$3$-dominating sets of $L_3(L_{G},n)$, so $d_3(L_3(L_{G},n))\geq n-1$. On the other hand, $d_3(L_3(L_{G},n))\leq \frac{n^2}{n+1}$ implies $d_3(L_3(L_{G},n))\leq n-1$. Hence $d_3(L_3(L_{G},n))=\lfloor \frac{n^2}{n+1}\rfloor$. 
Figure \ref{fig:1} shows that $9$ disjoint $3$DSs of a cyclic Latin squares of order $10$.
\begin{figure}[h]
    \centering
    \includegraphics[width=53mm, height=53mm]{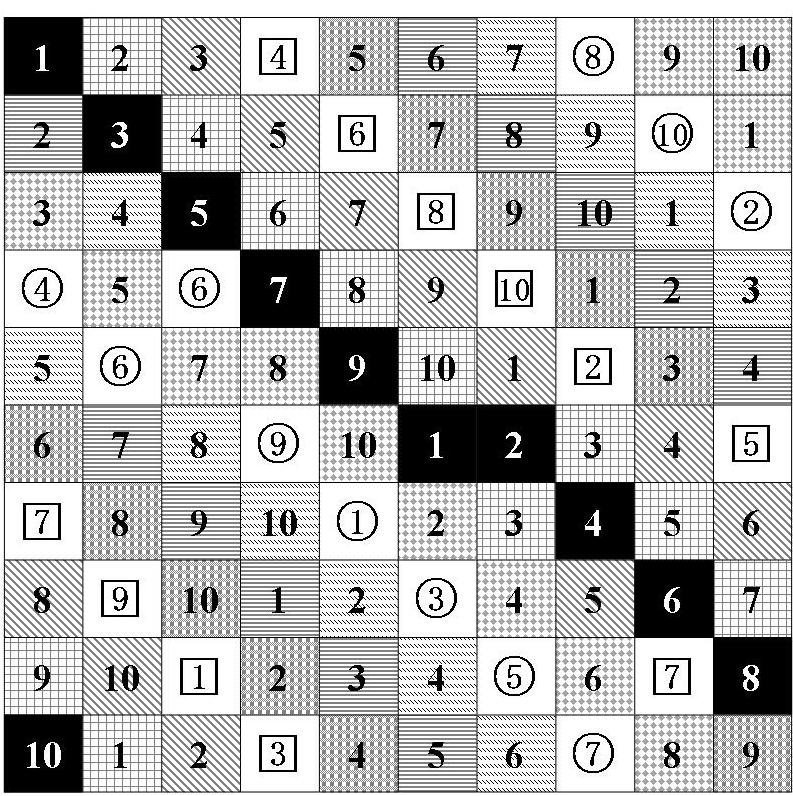}
    \caption{$9$ disjoint $3$DSs of a cyclic Latin squares of order $10$}
    \label{fig:1}
\end{figure}
\end{proof}

As a consequence of Theorems \ref{quasi-transversal=3DN n+1}, \ref{n even k odd} and \ref{gamma=n+1}, the next result presents a family of Latin squares which has no transversal but has a quasi-transversal. 

\begin{cor}\label{$q$-step}
Any $q$-step type Latin square $L$ of even order $n=mq$ where the $q\times q$ sublatin squares are cyclic, $m$ is even and $q$ is odd, has no transversal but has a quasi-transversal.
\end{cor}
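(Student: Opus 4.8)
The plan is to read the corollary as two independent assertions and to obtain each directly from the results already proved, so that the whole argument is a short assembly rather than a new computation.

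First I would settle that $L$ has no transversal. A transversal is exactly a $1$-transversal, and $k=1$ is odd. Since the hypotheses supply that $q$ is odd and $m$ is even, Theorem \ref{n even k odd} applies with $k=1$ and states that no $q$-step type Latin square of order $mq$ possesses a $1$-transversal. Hence $L$ has none.

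Next I would produce a quasi-transversal by passing through the Latin square graph. Under exactly the stated hypotheses---even order $n=mq$ with cyclic $q\times q$ sublatin squares, $m$ even and $q$ odd---Theorem \ref{gamma=n+1} gives $\gamma_3(L_3(L,n))=n+1$. In particular there is a minimum $3$-dominating set $T$ of $L_3(L,n)$ with $|T|=n+1$. I would then invoke Proposition \ref{quasi-transversal=3DN n+1}: for $n\geq 3$ a $3$-dominating set of cardinality $n+1$ corresponds, via $T'=\{\ell_{ij}\mid (i,j)\in T\}$, to a quasi-transversal in $L$. Combining the two halves yields the corollary.

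There is no substantive obstacle here, since the hard work lives in the three cited results; the only care required is to verify that the single hypothesis of the corollary simultaneously meets the hypotheses of all three theorems, and to use the nontrivial implication of Proposition \ref{quasi-transversal=3DN n+1} (that a $3$DS of order $n+1$ yields a quasi-transversal) rather than its easy converse. I would also remark that the conclusion is degenerate only in the smallest case $n=2$ (forced by $q=1$, $m=2$), where the graph is complete and Proposition \ref{quasi-transversal=3DN n+1} is stated for $n\geq 3$; the intended instances have $n\geq 4$, so everything applies as stated.
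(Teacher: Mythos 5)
Your proposal is correct and follows exactly the route the paper intends: it derives the corollary from Theorem \ref{n even k odd} (with $k=1$), Theorem \ref{gamma=n+1}, and the nontrivial direction of Proposition \ref{quasi-transversal=3DN n+1}, which are precisely the three results the paper cites as the source of this corollary. Your added care about which direction of Proposition \ref{quasi-transversal=3DN n+1} is needed, and about the degenerate case $n=2$, is a reasonable refinement of the same argument.
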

Corollary \ref{$q$-step} implies that for even $n$ the Cayley table of the addition groups $\mathbb{Z}_n$, which is an $1$-step 
Latin square, has a quasi-transversal while it has no transversal.

\vspace{0.2cm}

The following observation states the relationship between transversals, quasi-transversals and near-transversals in a Latin square. 

\begin{obs}
\label{QT=NT}
Let $L$ be a Latin square oforder $n\geq 3$. Then\\
\textbf{i}. every transversal in $L$ is contined in the quasi-transversal which is obtained by adding one symbol to the transversal,\\
\textbf{ii}. every quasi-transversal in $L$ contains the near-transversal which is obtained by deleting the repeated symbol in the quasi-transversal,\\
\textbf{iii}. every near-transversal in $L$ is contained in the quasi-transversal which is obtained by adding the missing symbol two times, one time in the empty row and one time in the empty column,\\
\textbf{iv}. every quasi-transversal in $L$ contains a transversal if and only if the near-transversal is obtained from it is completable.
\end{obs}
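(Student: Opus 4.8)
The plan is to treat each of the four parts as an elementary add/delete operation on the cells of the relevant object, checking in each case the three defining multiplicity conditions (one entry per row, per column and per symbol, with the prescribed single exception). Throughout I write a quasi-transversal $Q$ as an $(n+1)$-set of cells in which exactly one row, one column and one symbol occur twice, and I call these the \emph{exceptional row} $a$, the \emph{exceptional column} $b$ and the \emph{repeated symbol} $s$. Part \textbf{i} is then immediate: given a transversal $T$, pick any cell $(i,j)\notin T$; since $T$ meets row $i$, column $j$ and symbol $\ell_{ij}$ exactly once, adjoining $(i,j)$ raises each of these to multiplicity two and leaves all other rows, columns and symbols at multiplicity one, so $T\cup\{(i,j)\}$ is a quasi-transversal containing $T$ whose repeated symbol is $\ell_{ij}$.

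For part \textbf{iii}, let $N$ be a near-transversal and let $r_0,c_0,s_0$ be the unique row, column and symbol it misses. Let $c_1$ be the column in which $s_0$ occurs in row $r_0$, and $r_1$ the row in which $s_0$ occurs in column $c_0$; both exist and are unique because $L$ is Latin. I would adjoin the two cells $(r_0,c_1)$ and $(r_1,c_0)$, each carrying symbol $s_0$. One checks these cells coincide precisely when $\ell_{r_0,c_0}=s_0$; assuming this fails, so $r_1\neq r_0$ and $c_1\neq c_0$, adjoining them fills the empty row $r_0$ and empty column $c_0$ while raising row $r_1$, column $c_1$ and symbol $s_0$ to multiplicity two, yielding a quasi-transversal containing $N$ with repeated symbol $s_0$. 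The excluded case $\ell_{r_0,c_0}=s_0$ is exactly the completable one addressed in part \textbf{iv}, where the single cell $(r_0,c_0)$ already extends $N$ to a transversal.

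Part \textbf{ii} is where the real work lies. I would delete from $Q$ the two cells carrying the repeated symbol $s$, leaving an $(n-1)$-set whose symbols are the $n-1$ symbols other than $s$. To conclude this set is a near-transversal I must verify it also meets each row and each column at most once, and this forces the crux of the whole observation: I must show that one of the two $s$-cells lies in the exceptional row $a$ and one lies in the exceptional column $b$, so that their removal drops $a$ and $b$ back to multiplicity one (emptying one further row and one further column) rather than emptying a singleton row while leaving $a$ doubled. I expect this structural linkage between the repeated symbol and the exceptional row/column to be the main obstacle, since a priori the two occurrences of $s$ need not sit on $a$ and $b$, and establishing that they must (or isolating the hypotheses under which they do) is the delicate point. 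Once the linkage holds, the remaining $n-1$ cells lie in distinct rows and columns and carry distinct symbols, giving a near-transversal; this is moreover exactly the near-transversal reconstructed by the construction in part \textbf{iii}, so \textbf{ii} and \textbf{iii} are mutually inverse.

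Finally, for part \textbf{iv}, note that a transversal sits inside $Q$ iff some single cell can be deleted from $Q$ to leave $n$ cells hitting each row, column and symbol once; since deleting a cell $(x,y)$ of symbol $z$ lowers the multiplicities of $x$, $y$ and $z$, this succeeds iff that cell simultaneously realises the exceptional row, the exceptional column and the repeated symbol, i.e. iff $(a,b)\in Q$ with $\ell_{a,b}=s$. I would match this against completability of the near-transversal $N$ from part \textbf{ii}: tracking the two $s$-cells through the deletion shows that $N$ misses $s$ together with the row and column they vacate, and that $N$ extends to a transversal iff the cell in that empty row and empty column carries $s$ — precisely the same condition as $(a,b)\in Q$ with $\ell_{a,b}=s$. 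Hence $Q$ contains a transversal iff $N$ is completable. The only ingredient beyond this bookkeeping is again the linkage established in part \textbf{ii}.
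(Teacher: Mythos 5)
The paper states this Observation without proof, so the only thing to assess is your argument on its own terms. Parts \textbf{i} and \textbf{iii} are handled correctly (including the degenerate case $\ell_{r_0,c_0}=s_0$ in \textbf{iii}), and your reduction of \textbf{iv} to the condition ``$(a,b)\in Q$ and $\ell_{a,b}=s$'' is sound. The genuine gap is exactly the one you flag in part \textbf{ii}: you need the two cells carrying the repeated symbol $s$ to account for the exceptional row $a$ and the exceptional column $b$, and you do not prove this. In fact it is false under the paper's definition, so the step cannot be filled in. Let $L$ be the cyclic Latin square of order $4$ with $\ell_{i,j}\equiv i+j-1\pmod 4$ and take
\[
Q=\{(1,2),(1,3),(2,4),(3,3),(4,1)\},
\]
whose entries are $2,3,1,1,4$. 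Then $Q$ meets row $1$ twice and every other row once, meets column $3$ twice and every other column once, and every symbol except $1$ occurs exactly once; so $Q$ is a quasi-transversal with $a=1$, $b=3$, $s=1$. The two $s$-cells are $(2,4)$ and $(3,3)$, neither of which lies in row $1$, and deleting them leaves $\{(1,2),(1,3),(4,1)\}$, which still meets row $1$ twice and hence is not a partial transversal. Thus part \textbf{ii} fails as stated for this $Q$ (it does contain a near-transversal, e.g.\ $\{(1,2),(2,4),(4,1)\}$, but not the one ``obtained by deleting the repeated symbol''), and your route to \textbf{iv}, which presupposes that this deletion yields a near-transversal, inherits the same defect.

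This is not merely a cosmetic issue for the paper: the direction of Observation \ref{QT=NT} needed to deduce Conjecture \ref{Latin has NT} from Conjecture \ref{Latin square has QT} is precisely part \textbf{ii}. To make \textbf{ii} and \textbf{iv} correct one must either restrict attention to quasi-transversals of the special form produced in parts \textbf{i} and \textbf{iii} (where one cell of the repeated symbol lies in the doubled row and one in the doubled column), or replace ``the near-transversal obtained by deleting the repeated symbol'' by ``some near-transversal'' and supply a genuinely new argument. Your instinct that this linkage is ``the delicate point'' was right; the next step should have been to hunt for a counterexample rather than a proof, and the one above settles it.
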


The following conjecture has been attributed to Brualdi (see \cite{Keedwell}, p.103) and Stein \cite{Stein} and, in \cite{EHNS}, to Ryser. For generalisations of it, in terms of hypergraphs, see \cite{AB}.

\begin{conj}
\label{Latin has NT} Every Latin square has a near-transversal.
\end{conj}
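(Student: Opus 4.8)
The plan is to recast the conjecture entirely in terms of the $3$-domination number of the Latin square graph and thereby isolate the single inequality that carries all the content. First I would dispose of the small cases: for $n\le 2$ a near-transversal (a partial transversal of length $n-1$) exists trivially, so I would assume $n\ge 3$ throughout.

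The core of the reduction is that, for $n\ge 3$, a Latin square $L$ has a near-transversal if and only if it has a quasi-transversal. This is immediate from Observation \ref{QT=NT}: part (ii) produces a near-transversal from any quasi-transversal by deleting the repeated symbol, while part (iii) extends any near-transversal to a quasi-transversal by inserting the missing symbol in the empty row and the empty column. I would then chain this with Proposition \ref{quasi-transversal=3DN n+1}, which identifies the quasi-transversals of $L$ with the $3$-dominating sets of cardinality $n+1$ of $L_3(L,n)$, and with Theorem \ref{Suf.Con.} together with Observation \ref{QT=NT}(i) (a transversal, i.e.\ a $3$-dominating set of cardinality $n$, always extends to a quasi-transversal). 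Since Proposition \ref{LB.gamma_3(LSG)} guarantees $\gamma_3(L_3(L,n))\ge n$, these facts combine into the exact equivalence
\[
L \text{ has a near-transversal} \iff \gamma_3(L_3(L,n))\le n+1 .
\]
In this way the whole conjecture is reduced to a single clean assertion about one graph parameter.

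It then remains to exhibit, for an arbitrary Latin square of order $n\ge 3$, a $3$-dominating set of $L_3(L,n)$ of cardinality $n+1$, i.e.\ to prove
\[
\gamma_3(L_3(L,n))\le n+1 .
\]
For the structured families this follows by explicit construction, exactly in the spirit of the case analysis of Theorem \ref{gamma=n+1}: place one chosen cell in almost every row and column, then add a small bounded number of doubled cells to repair precisely those symbols that would otherwise be triply dominated. For a completely arbitrary square I would instead attempt a greedy or probabilistic construction, building a near-transversal cell by cell and absorbing the few obstructing symbols at the very end.

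The hard part is exactly this last step, and there is no way around it: by the displayed equivalence the inequality $\gamma_3(L_3(L,n))\le n+1$ is not merely sufficient for the conjecture but logically identical to it, so no further formal manipulation inside the domination framework can close the argument. The strongest general estimates available yield only a partial transversal of length $n-O(\log^2 n)$, which translates into $\gamma_3(L_3(L,n))\le n+O(\log^2 n)$ and misses the target by a non-constant amount. Bridging that gap in full generality would demand either the heavy absorption and iterative-refinement machinery that settles the problem for all sufficiently large $n$, or a genuinely new structural grip on the small set of defect cells. I expect this to be the true obstacle, and indeed the reason the statement remains a conjecture rather than a theorem.
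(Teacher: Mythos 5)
The statement you are asked about is a \emph{conjecture}: the paper offers no proof of it, only an equivalent reformulation. What the paper actually does is observe (via Observation \ref{QT=NT}, parts (ii) and (iii)) that for $n\ge 3$ a Latin square has a near-transversal if and only if it has a quasi-transversal, and on that basis restates the Brualdi--Stein--Ryser conjecture as Conjecture \ref{Latin square has QT}. Your reduction reproduces exactly this equivalence and then pushes it one step further, through Proposition \ref{quasi-transversal=3DN n+1}, Theorem \ref{Suf.Con.} and Proposition \ref{LB.gamma_3(LSG)}, into the single inequality $\gamma_3(L_3(L,n))\le n+1$. That chain of equivalences is sound and is entirely in the spirit of the paper; the domination-theoretic phrasing is a modest extension of what the authors write, and it is a legitimate alternative formulation of the same open problem.

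The genuine gap is the one you name yourself: the final step, establishing $\gamma_3(L_3(L,n))\le n+1$ for an \emph{arbitrary} Latin square, is not an auxiliary lemma but is, by your own displayed equivalence, logically identical to the conjecture. The greedy/probabilistic sketch you offer does not close it, and the explicit constructions in the style of Theorem \ref{gamma=n+1} only handle the structured $q$-step families the paper treats. So you have produced a correct and well-motivated reformulation, not a proof --- which is consistent with the fact that the paper itself presents the statement as a conjecture rather than a theorem. Your candour about where the difficulty lies is appropriate; just be aware that re-expressing an open problem in another parameter, however cleanly, does not constitute progress toward proving it unless the new formulation admits tools the old one did not.
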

By considering Observation \ref{QT=NT}, the conjecture \ref{Latin has NT} is equivalent to the following our conjecture.
\begin{conj}
\label{Latin square has QT} Every Latin square has a quasi-transversal.
\end{conj}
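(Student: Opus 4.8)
The plan is to prove the equivalent statement that every Latin square $L$ of order $n$ has a near-transversal, since by part \textbf{iii} of Observation \ref{QT=NT} any near-transversal extends to a quasi-transversal (insert the missing symbol once in the empty row and once in the empty column); in the language of this paper the same target reads, via Proposition \ref{quasi-transversal=3DN n+1} together with the lower bound of Proposition \ref{LB.gamma_3(LSG)}, as $\gamma_3(L_3(L,n))\le n+1$. It is cleanest to do the actual work in the rainbow-matching model. Associate to $L$ the complete bipartite graph $K_{n,n}$ whose parts are the rows and the columns, and color the edge $\{i,j\}$ by the symbol $\ell_{ij}$; because $L$ is Latin this is a proper edge-coloring in which each of the $n$ color classes is a perfect matching. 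Under this dictionary a transversal is a rainbow perfect matching and a near-transversal is a rainbow matching of size $n-1$ — this is precisely the reformulation underlying Theorem \ref{ortho}, and it is the setting in which partial transversals are easiest to manipulate.

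First I would build, by a semi-random (R\"odl nibble) procedure, a rainbow partial matching $M_0$ covering all but $o(n)$ of the rows and columns and using all but $o(n)$ of the colors. At each round one selects a sparse random set of candidate edges, deletes every pair that conflicts in a vertex or a color, and keeps the rest; a standard application of the Lov\'asz Local Lemma, together with concentration for the numbers of still-available (row, column, color) triples, shows that after boundedly many rounds one is left with a rainbow matching of size $n-O(n^{1-\varepsilon})$ whose uncovered rows, columns and colors are quasi-randomly distributed. This step is by now routine and already yields a partial transversal of length $n-o(n)$ unconditionally.

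The heart of the argument is an \emph{absorption} step that upgrades the near-perfect $M_0$ to size $n-1$. Before running the nibble one reserves a small family of \emph{switching gadgets}: short alternating configurations built from two or three symbol classes (each a perfect matching), so that replacing a covered edge of color $a$ by an uncovered edge of color $b$ along a short augmenting path keeps the matching rainbow. These gadgets are designed so that any prescribed leftover row, leftover column and leftover color can be absorbed simultaneously; then, whatever $o(n)$ deficiency the nibble produces, activating the appropriate gadgets clears it down to a single uncovered row, column and symbol, i.e.\ a rainbow matching of size $n-1$. This is exactly the strategy through which the Brualdi--Stein--Ryser conjecture has been attacked, and it is what one must make work in full.

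The main obstacle is the design and uniform analysis of the absorbers, and it is genuine: by Observation \ref{QT=NT} Conjecture \ref{Latin square has QT} is equivalent to the Brualdi--Stein--Ryser conjecture, a central open problem of design theory. The absorption method delivers the conclusion only for large $n$, since it needs room to embed $\Omega(n)$ disjoint gadgets with controlled global behaviour; the small and highly structured squares do not have this slack and must be handled directly. Here the explicit constructions of the present paper are encouraging rather than obstructive: Theorem \ref{gamma=n+1} already exhibits quasi-transversals in exactly the structured $q$-step squares that possess \emph{no} full transversal, which is the regime where the generic argument is weakest. A complete unconditional proof for every order would therefore splice the absorption argument for large $n$ with explicit constructions or a finite, possibly computer-assisted verification for the bounded range, and it is precisely this coupling — uniform control of the absorbers on one side and the bounded-range analysis on the other — that remains the delicate and, in full generality, still-open point.
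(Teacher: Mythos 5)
Your writeup is not a proof, and you say so yourself: the decisive step is missing. The statement you were asked to prove is stated in the paper as a \emph{conjecture}, with no proof offered; the paper's entire content on this point is the equivalence with the Brualdi--Stein--Ryser near-transversal conjecture via Observation \ref{QT=NT}, plus a verification for one structured family ($q$-step squares with cyclic blocks, Theorem \ref{gamma=n+1} and Corollary \ref{$q$-step}). Your opening reduction --- pass to near-transversals through part \textbf{iii} of Observation \ref{QT=NT}, equivalently bound $\gamma_3(L_3(L,n))\le n+1$ --- reproduces exactly the paper's equivalence and is fine. Everything after that is a research program, not an argument. The concrete gap is the absorption step: you never construct the ``switching gadgets,'' never prove they can be reserved disjointly before the nibble, and never verify that they can simultaneously absorb an arbitrary $o(n)$ pattern of leftover rows, columns and colors while staying rainbow. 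Asserting that the gadgets ``are designed so that any prescribed leftover \ldots can be absorbed'' is precisely the assertion of the Brualdi--Stein--Ryser conjecture (for large $n$) and cannot be taken as a lemma; it is the open problem itself, as you concede in your final paragraph. The nibble half, by contrast, is genuinely available in the literature (it gives rainbow matchings of size $n-o(n)$ unconditionally), but on its own it never closes the gap to $n-1$: iterating local augmentations stalls well before a near-transversal without a global absorbing structure, which is exactly why elementary augmenting-path arguments top out around $n-\sqrt{n}$.

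Two further points, even granting the absorbers. First, the method would only cover $n\ge n_0$ for some large unspecified $n_0$, and your proposed patch for small orders --- ``explicit constructions or a finite, possibly computer-assisted verification'' --- is not finite as described, since $n_0$ is not effective in your sketch; without an explicit $n_0$ there is no bounded range to check. Second, a minor translation slip: Proposition \ref{quasi-transversal=3DN n+1} converts $3$-dominating sets of cardinality $n+1$ into quasi-transversals, but $\gamma_3(L_3(L,n))\le n+1$ also admits the case $\gamma_3=n$, where you need Theorem \ref{Suf.Con.} plus part \textbf{i} of Observation \ref{QT=NT} (a transversal extends to a quasi-transversal) to conclude; this is easily repaired but should be said. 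The honest summary is that your proposal correctly identifies the standard modern attack on this conjecture and correctly locates where it is open; it therefore cannot be judged against ``the paper's proof,'' because the paper, rightly, offers none.
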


\section{The Rodney's conjecture}

Rodney in \cite{Rodney} has given the next conjecture. 

\begin{conj} 
Every Latin square has a 2-transversal.
\end{conj}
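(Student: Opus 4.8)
The plan is as follows. Since the conjecture in full generality is a notorious open problem, I would first isolate the genuinely difficult instances and dispatch the easy ones. If a Latin square $L$ admits two disjoint transversals, then their union has exactly two cells in each row and column and each symbol exactly twice, hence is already a $2$-transversal; in particular, for the Cayley table $L_G$ of a group $G$ the dichotomy $\tau(L_G)\in\{0,n\}$ recalled before Theorem \ref{tau=2^n} shows that whenever $L_G$ has one transversal it decomposes into $n\geq 2$ disjoint transversals, any two of which combine into a $2$-transversal. Likewise, by Theorem \ref{tau=2^n} every $2$-step type square of order $2^n\geq 4$ decomposes into disjoint transversals and so has a $2$-transversal. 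Hence the only interesting case is that of transversal-free squares, the prototype being the family of Theorem \ref{gamma=n+1}: $q$-step type squares of even order $n=mq$ with cyclic $q\times q$ blocks, $m$ even and $q$ odd, which by Theorem \ref{n even k odd} possess no transversal at all.

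For these squares I would build a $2$-transversal explicitly, starting from the cyclic case $q=1$, i.e. $L=\mathbb{Z}_n$ with $\ell_{ij}=i+j \pmod n$ and $n$ even. For $c\in\mathbb{Z}_n$ let $D_c=\{(i,\,i+c)\mid i\in\mathbb{Z}_n\}$ be the broken diagonal of shift $c$; it meets each row and each column exactly once, and its symbol multiset is $\{\,2i+c\mid i\in\mathbb{Z}_n\,\}$, which for even $n$ consists of every residue of the parity of $c$, each occurring twice. Consequently $D_0$ covers the even symbols twice each and $D_1$ the odd symbols twice each, and since distinct broken diagonals are disjoint, the union
\begin{equation*}
D_0\cup D_1=\{(i,i)\mid i\in\mathbb{Z}_n\}\cup\{(i,i+1)\mid i\in\mathbb{Z}_n\}
\end{equation*}
has exactly two cells in each row, two in each column, and every symbol exactly twice. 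Thus $D_0\cup D_1$ is the desired $2$-transversal, confirming Rodney's conjecture for all cyclic squares of even order and, via Remark \ref{1-step}, in particular for $\mathbb{Z}_n$.

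To reach the general $q$-step case I would exploit the block structure used in Theorem \ref{gamma=n+1}. Each diagonal band of $q\times q$ blocks carries a fixed coset of symbols, and inside each cyclic block the same complementary-parity shifted-diagonal idea applies. The aim is to choose, in each block met by the construction, a pair of shifted partial diagonals whose combined symbol content, aggregated over a whole symbol-coset, hits every symbol of that coset exactly twice, while globally maintaining two cells per row and two per column across block boundaries. This is the same bookkeeping that makes the explicit $3$-dominating set of Theorem \ref{gamma=n+1} work, so I would model the $2$-transversal on that set, replacing the single extra repeated cell there by a second complete shifted system so that every symbol is doubly covered rather than singly.

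The main obstacle is precisely this last step: coordinating the two shifted diagonal systems across the $m$ block-rows and $m$ block-columns so that the two-per-line conditions hold simultaneously for rows, columns and symbols when $q>1$. The parity restriction of Theorem \ref{n even k odd} guarantees that no single diagonal system can be balanced on its own, so the proof must genuinely interleave two systems of opposite symbol-parity, and verifying the column constraints at the seams between blocks is where the case analysis concentrates. Beyond this family, the conjecture for arbitrary Latin squares appears to lie outside the reach of such explicit constructions, and I would not expect the present graph-theoretic domination machinery to settle it in full.
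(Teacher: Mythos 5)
The statement as printed is Rodney's conjecture itself, which the paper does not (and does not claim to) prove in general; its actual contribution is the theorem immediately following, establishing a $2$-transversal for every $q$-step type Latin square of even order $n=mq$ with cyclic $q\times q$ blocks, $m$ even and $q$ odd. Measured against that theorem, your reductions are sound: the union of two disjoint transversals is a $2$-transversal, so only transversal-free squares matter, and your treatment of the cyclic case $q=1$ is correct and in fact cleaner than the paper's. Your set $D_0\cup D_1$ (the main diagonal together with the first broken diagonal) is exactly the same set of $2n$ cells that the paper assembles as $S\cup S'$ from a quasi-transversal and a near-transversal; your parity argument (even symbols each twice from $D_0$, odd symbols each twice from $D_1$) verifies the symbol condition directly without that detour.

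The genuine gap is the case $q>1$, which is where essentially all of the paper's work lies. You describe the intended strategy -- interleave two shifted diagonal systems of opposite symbol parity across the $m\times m$ block structure -- and then explicitly flag the coordination of these systems at the block seams as an unresolved obstacle. But that coordination \emph{is} the proof: the paper resolves it by writing down explicit cell lists, separately for $m=2$ and $m>2$, each presented as a quasi-transversal $S$ (with the symbol $n$ doubled) together with a disjoint near-transversal $S'$ (missing exactly the symbol $n$, in the row and column where $S$ doubles up), so that $S\cup S'$ has two cells per row, two per column, and every symbol twice. Without producing and verifying such a construction, your argument establishes the result only for $q=1$ (and for squares already possessing two disjoint transversals), not for the full family. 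The plan is plausible and points in the right direction, but the step you defer is not routine bookkeeping that can be waved through; it is the content of Cases 2 and 3 of the paper's proof.
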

Here, we prove the truth of the Rodney's conjecture for a family of Latin squres.

\begin{thm}
Every $q$-step type Latin square $L$ of even order $n=mq$, where the $q\times q$ sublatin squares are cyclic, $m$ is even and $q$ is odd, has a $2$-transversal.
\end{thm}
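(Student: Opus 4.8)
The plan is to build the $2$-transversal in two layers, exploiting that $L$ is simultaneously ``cyclic of order $m$ at the block level'' and ``cyclic of order $q$ inside each block.'' Write $L$ as an $m\times m$ array of $q\times q$ blocks $A_{ij}$ ($0\le i,j\le m-1$), where each $A_{ij}$ is a cyclic Latin square of order $q$ whose symbols form a class $\Sigma_{(i+j)\bmod m}$, and $\Sigma_0,\dots,\Sigma_{m-1}$ partition $[n]$ into $m$ sets of size $q$ (disjointness follows since a single row of $L$ crosses all $m$ blocks of its block-row and must see all $n$ symbols). Because $q$ is odd, each block $A_{ij}$ is a cyclic Latin square of odd order and hence possesses a transversal $\theta_{ij}$, by the same fact that makes the diagonal of an odd-order abelian group table a transversal; such a $\theta_{ij}$ meets every row and every column of its block exactly once and uses each of the $q$ symbols of $\Sigma_{(i+j)\bmod m}$ exactly once. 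The idea is to select a suitable family $\mathcal{B}$ of blocks and take $D=\bigcup_{(i,j)\in\mathcal{B}}\theta_{ij}$; the selection must place two blocks in each block-row and each block-column and must cover every symbol class $\Sigma_s$ by exactly two chosen blocks, so that $D$ becomes a $2$-transversal of $L$.

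For the block selection I would use the even/odd-diagonal trick that already settles the purely cyclic case. Since $m$ is even, fix an even shift $c_1$ and an odd shift $c_2$ and set
\[
\mathcal{B}=\{(i,(i+c_1)\bmod m)\mid 0\le i\le m-1\}\cup\{(i,(i+c_2)\bmod m)\mid 0\le i\le m-1\}.
\]
Because $c_1\not\equiv c_2\pmod m$ (they have different parities and $m$ is even), the two diagonals are disjoint, so $\mathcal{B}$ contributes exactly two blocks to each block-row and, reading by column index, exactly two blocks to each block-column, giving $|\mathcal{B}|=2m$. The symbol class of $(i,(i+c_t)\bmod m)$ is $(2i+c_t)\bmod m$; as $i$ ranges over $\mathbb{Z}_m$ the value $2i$ hits each even residue exactly twice, so the $c_1$-diagonal covers each even class exactly twice and contributes nothing to any odd class, while the $c_2$-diagonal does the reverse. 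Hence every class $\Sigma_s$ is the symbol class of precisely two blocks of $\mathcal{B}$.

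Finally I would assemble $D=\bigcup_{(i,j)\in\mathcal{B}}\theta_{ij}$ and verify the three defining conditions of a $2$-transversal. Each row of $L$ lies in some block-row $i$, which contains exactly two blocks of $\mathcal{B}$, each contributing one cell in that row through its transversal, giving two cells per row; the identical count works for columns using the block-column property. For the symbols, each class $\Sigma_s$ is carried by exactly two blocks of $\mathcal{B}$, and the transversal inside each of them uses every symbol of $\Sigma_s$ once, so each symbol of $[n]$ occurs exactly twice in $D$; since the blocks are cell-disjoint, $|D|=2m\cdot q=2n$, confirming $D$ is a $2$-transversal. The one routine wrinkle is the degenerate case $q=1$, where the blocks are single cells and $D$ is literally the union of two opposite-parity diagonals of the order-$n$ cyclic square, which is checked directly. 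I expect the main obstacle to be purely bookkeeping: pinning down the indexing of the blocks and their symbol classes precisely enough that the parity argument ``each even class is hit exactly twice by the even diagonal'' is airtight, together with recording cleanly that a cyclic block of odd order always contributes a transversal meeting each of its lines once.
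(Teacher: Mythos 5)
Your proof is correct, and it takes a genuinely different route from the paper. The paper argues by brute force: in three cases ($q=1$; $q\neq 1, m=2$; $q\neq 1, m\neq 2$) it writes down explicit index formulas for a quasi-transversal $S$ and a disjoint near-transversal $S'$ whose union is the desired $2$-plex, so the verification is a cell-by-cell bookkeeping exercise (and the construction doubles as an illustration of the quasi-transversal machinery of Section 3). You instead work at the block level: choose the two block-diagonals $\{(i,i+c_1)\}$ and $\{(i,i+c_2)\}$ with $c_1\not\equiv c_2 \pmod 2$, observe that since the symbol class of block $(i,j)$ is $(i+j)\bmod m$ and $m$ is even, the map $i\mapsto 2i+c$ covers each residue of the parity of $c$ exactly twice, and then paste in one transversal per selected block. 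This collapses the three cases into one argument, makes the row/column/symbol counts immediate, and in fact proves something slightly stronger: only the existence of a transversal in each block is used, not cyclicity, so the same proof covers any $q$-step square with even $m$ whose blocks each admit a transversal (e.g., blocks that are Cayley tables of any odd-order group). The one point you should nail down is your appeal to ``the diagonal'' of a cyclic block: under the paper's definition of cyclic Latin square the main diagonal need not be a transversal (it can even be constant), but every such square is isotopic to the Cayley table of $\mathbb{Z}_q$, which for odd $q$ has a transversal, and isotopy preserves transversals — so the needed $\theta_{ij}$ exists; just say it that way rather than via the literal diagonal. In the degenerate case $q=1$ your two diagonals coincide with what the paper's $S\cup S'$ produces, so the constructions agree there; in general they differ, but both are valid.
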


\begin{proof}
\textbf{Case 1}. $q=1$. The set 
\begin{equation*}
\begin{array}{lll}
S &  =  &\{\ell_{i,i}\mid 1\leq i\leq \frac{n}{2}\}\\
& \cup &\{\ell_{i,i+1} \mid \frac{n}{2}+1\leq i\leq n\}\\
& \cup &
\{\ell_{\frac{n}{2}+1,\frac{n}{2}+1}\}
\end{array}
\end{equation*}\\
is a quasi-transversal in $L$ with $\ell_{1,1}=\ell_{\frac{n}{2}+1,\frac{n}{2}+1}=1$, and the set
\begin{equation*}
\begin{array}{lll}
S' &  =  &\{\ell_{i,i+1}\mid 1\leq i\leq \frac{n}{2}\}\\
& \cup &\{\ell_{i,i} \mid \frac{n}{2}+2\leq i\leq n\}
\end{array}
\end{equation*}\\
is a near-transversal in $L$ with the missing entry 1 in the $(n/2+1)$-th row and the $(n/2+1)$-th column. Then $S\cup S'$ is a $2$-transversal of $L$, because $S\cap S'=\emptyset$.
 
\textbf{Case 2}. $q\neq 1$ and $m=2$. The set
 \begin{equation*}
\begin{array}{lll}
S &  =  &\{\ell_{2i +1,2i +1}\mid 0\leq i \leq {q-1}\}\\
& \cup &\{(\ell_{2i +2,2i+3+\frac{n}{2}}\mid 0\leq i \leq \frac{q-3}{2}\}\\
& \cup &\{\ell_{2i +1+\frac{n}{2},2i +2}\mid 0\leq i \leq \frac{q-3}{2}\}\\
& \cup &\{\ell_{\frac{n}{2},{\frac{n}{2}}+1},\ell_{n,1}\}
\end{array}
\end{equation*}\\
is a quasi-transversal in $L$ with $\ell_{n,1}=\ell_{\frac{n}{2},\frac{n}{2}+1}=n$, and the set
\begin{equation*}
\begin{array}{lll}
S' &  =  &\{\ell_{2i +2,2i +2}\mid 0\leq i \leq {q-1}\}\\
& \cup &\{\ell_{2i+1,2i+2+\frac{n}{2}}\mid 0\leq i \leq \frac{q-3}{2}\}\\
& \cup &\{\ell_{2i +2+\frac{n}{2},2i +3}\mid 0\leq i \leq \frac{q-3}{2}\}\\
\end{array}
\end{equation*}\\
is a near-transversal in $L$ with the missing entry $n$ in the $n/2$-th row and the first column. Then $S\cup S'$ is a $2$-transversal of $L$, because $S\cap S'=\emptyset$.

\textbf{Case 3}. $q\neq 1$ and $m\neq 2$. Since the set
\begin{equation*}
\begin{array}{lll}
S &  =  &\{\ell_{qj+2i +1,qj+2i +1}\mid 0\leq i \leq \frac{q-1}{2},0\leq j\leq \frac{m}{2}-1\}\\
& \cup &\{\ell_{qj+2i +1+\frac{n}{2},q(j+1)+2i +1+\frac{n}{2}}\mid 0\leq i \leq \frac{q-1}{2},0\leq j\leq \frac{m}{2}-2\}\\
& \cup &\{\ell_{qj+2i +2,q(j+1)+2i +2}\mid 0\leq i \leq \frac{q-3}{2},0\leq j\leq \frac{m}{2}-2\}\\
& \cup &\{\ell_{qj+2i +2+\frac{n}{2},qj+2i +2+\frac{n}{2}}\mid 0\leq i \leq \frac{q-3}{2},0\leq j\leq \frac{m}{2}-1\}\\
& \cup &\{\ell_{2i +2+\frac{n}{2}-q,2i +1+\frac{n}{2}}\mid 0\leq i \leq \frac{q-3}{2}\}\\
& \cup & \{\ell_{n-2i ,q-(2i +1)}\mid 0\leq i \leq \frac{q-3}{2}\}\\
& \cup &\{\ell_{\frac{n}{2}+1-q,\frac{n}{2}+q},\ell_{n-q+1,q}\}
\end{array}
\end{equation*}\\
is a quasi-transversal in $L$ with $\ell_{\frac{n}{2}+1-q,\frac{n}{2}+q}=\ell_{n-q+1,q}=n$, and the set
\begin{equation*}
\begin{array}{lll}
S' &  =   &\{\ell_{qj+2i +2,qj+2i +2}\mid 0\leq t \leq \frac{q-3}{2},0\leq j\leq \frac{m}{2}-1\}\\
& \cup &\{\ell_{qj+2i +1+\frac{n}{2},qj+2i +1+\frac{n}{2}}\mid 0\leq i\leq \frac{q-1}{2},0\leq j\leq \frac{m}{2}-1\}\\
& \cup &\{\ell_{qj+2i +1,q(j+1)+2i +1}\mid 0\leq i \leq \frac{q-1}{2},0\leq j\leq \frac{m}{2}-2\}\\
& \cup &\{\ell_{qj+2i +2+\frac{n}{2},q(j+1)+2i +2+\frac{n}{2}}\mid 0\leq i \leq \frac{q-3}{2},0\leq j\leq \frac{m}{2}-2\}\\
& \cup &\{\ell_{2i +3-q+\frac{n}{2},2i+2+\frac{n}{2}}\mid 0\leq i \leq \frac{q-3}{2}\}\\
& \cup & \{\ell_{n-(2i +1) ,q-(2i +2)}\mid 0\leq i \leq \frac{q-3}{2}\}\
\end{array}
\end{equation*}\\
is a near-transversal in $L$ with the missing entry $n$ in the ($\frac{n}{2}+1-q$)-th row and the $q$-th column. Then $S\cup S'$ is a $2$-transversal of $L$, because $S\cap S'=\emptyset$.
\end{proof}
 

\end{document}